\newtheorem{thm}{Theorem}[section]
\newtheorem*{thm*}{Theorem}
\newtheorem{fact}[thm]{Fact}
\newtheorem{cor}[thm]{Corollary}
\newtheorem*{cor*}{Corollary}
\newtheorem{lem}[thm]{Lemma}
\newtheorem{prop}[thm]{Proposition}
\theoremstyle{remark}
\newtheorem{remark}[thm]{Remark}
\newtheorem{defn}{Definition}
\newtheorem*{example}{Example}
\theoremstyle{definition}
\tikzset{node distance=4cm, auto}
\begin{document}
\title{Motivic Decomposition of Projective Pseudo-homogeneous Varieties}
\author{Srimathy Srinivasan}
\address{Department of  Mathematics, University of Maryland, College Park, MD 20705, USA}
\email{srimathy@math.umd.edu}
\date{}

\begin{abstract}
Let $G$ be a semi-simple algebraic group over a perfect field  $k$. A lot of progress has been made recently in computing the Chow motives of projective $G$-homogeneous varieties. When $k$ has positive characteristic, a broader class of $G$-homogeneous varieties appear.  These  are varieties over which $G$ acts transitively with possibly non-reduced isotropy subgroup. In this paper we study these varieties which we call {\it projective pseudo-homogeneous varieties}  for $G$ inner type over $k$ and prove that their motives satisfy  Rost nilpotence.  We also find  their motivic decompositions  and relate them to the motives of  corresponding homogeneous varieties.  
\end{abstract}
\maketitle
\section{Introduction}
Let $G$ be a semi-simple  algebraic group of inner type over a perfect field $k$ of characteristic $p >3$ (See Remark \ref{rmk:char} for why the assumption $p>3$ is necessary).  We follow the terminology of SGA3. So by definition $G$ is smooth and connected with trivial radical.   Note that in SGA3, parabolic subgroups are reduced as schemes.  Therefore we use the term \textit{parabolic subgroup schemes} to  include possibly non-reduced subgroup schemes containing a Borel. Let $K$ denote the algebraic closure of $k$. For a variety $Y$ over $k$ and an extension $k'\supseteq k$, we write $Y_{k'}$ for $Y \times_{Spec~k} Spec~k'$. 
\begin{defn}  A $G$-variety $\widetilde{X}$  over $k$ is called a \textit{projective pseudo-homogeneous variety} \footnote{The term projective pseudo-homogeneous varieties  is coined in this paper to point towards a natural generalization of projective homogeneous varieties.  It is not to be confused with the definition used in \cite{karpenko_pseudo}} if $\widetilde{X}_K  \simeq G_K/\widetilde{P}$ for some parabolic subgroup scheme $\widetilde{P}$ in $G_K$ that is not necessarily reduced.
 \end{defn}

Such a variety is always smooth since $G$ is smooth (See SGA3, exp VI\textsubscript{A}, Theorem 3.2). For detailed construction of the quotient of  an algebraic group by a subgroup see Chapter III, \S3 of \cite{DG}. Note that  by Proposition 2.1, \S3, Chapter III of \cite{DG}, the condition $\widetilde{X}_K  \simeq G_K/\widetilde{P}$ is equivalent to saying that the action map $G(\Omega) \times \widetilde{X}(\Omega) \rightarrow \widetilde{X}(\Omega) \times \widetilde{X}(\Omega)$ is surjective for every algebraically closed field $\Omega$ over $K$. If $\widetilde{P}$ is a parabolic subgroup scheme over $K$, we will make slight abuse of notation and write $G/\widetilde{P}$ for $G_K/\widetilde{P}$.  Let $P$ denote the underlying reduced scheme of $\widetilde{P}$. Note that since $k$ is perfect, $P$ is  a group scheme (See \S6 in Chapter VI of \cite{AGSMilne}).
 
 \begin{defn} Given $\widetilde{X}$,  a projective pseudo-homogeneous variety for $G$ such that $\widetilde{X}_K  \simeq G/\widetilde{P}$,  let  $X$ denote the unique (see Proposition 1.3 in \cite{index_red})  projective homogeneous variety for $G$, such that $X_K \simeq G/P$ where $P$ is the underlying reduced subscheme of $\widetilde{P}$. We call $X$  \textit{the projective homogeneous variety corresponding to} $\widetilde{X}$.
 \end{defn}
 
 By universal property of quotients, there is a canonical $G$-equivariant finite morphism $\theta: X \rightarrow \widetilde{X}$. 
 
\begin{example}  Suppose $G=SL_{3,k}$. Let $G/\widetilde{P} \subseteq \mathbb{P}^2 \times \mathbb{P}^2$ be given by the equation $\sum_{i=0}^2 x_i^{p}y_i=0$  where the $G$ action is   $g.\overrightarrow{x} = g^{p^3}\overrightarrow{x}$ and $g.\overrightarrow{y} = (g^{-t })^{p^4}\overrightarrow{y}$  (Here $g^{-t} = (g^{-1})^t$ is the transpose of the inverse of $g$.  Also by abuse of notation $g^{p^n}$ means taking $p^n$th  power of entries of the matrix $g$). Then $\widetilde{P} = Stab([1:0:0] \times [0:0:1]) = \{\left( \begin{smallmatrix} *&*&*\\ x&*&* \\ y&z&*\end{smallmatrix} \right) | x^{p^3} = 0, y^{p^3} = 0, z^{p^4} = 0\}$. The underlying reduced scheme is the standard Borel $ P = \left( \begin{smallmatrix} *&*&*\\ 0&*&* \\ 0&0&*\end{smallmatrix} \right)$ and the corresponding homogeneous variety $G/P \subseteq \mathbb{P}^2 \times \mathbb{P}^2$ is given by $ \sum_{i=0}^2 x_i y_i =0$.  This comes with the standard $G$-action $g.\overrightarrow{x} = g\overrightarrow{x}$ and $g.\overrightarrow{y} = (g^{-t })\overrightarrow{y}$.   We have the canonical $G$-equivariant map
\begin{eqnarray*}
G/{P} \rightarrow G/\widetilde{P} \\
\overrightarrow{x} \rightarrow \overrightarrow{x}^{p^3}\\
\overrightarrow{y} \rightarrow \overrightarrow{y}^{p^4}
\end{eqnarray*}
\end{example}

 We want to emphasize that by Theorem 5.2 in \cite{splitting_lauritzen}, the $K$-varieties $G/\widetilde{P}$ and $G/P$ are not in general isomorphic.  Therefore, $X$ and $\widetilde{X}$ need not be twisted forms of each other.\\
\indent In this paper we prove that Rost nilpotence theorem holds for projective pseudo-homogeneous varieties. We also compute the Chow motives of these varieties and show that their motives are isomorphic to motives of the corresponding projective homogeneous varieties.  A crucial ingredient of the proof is Theorem \ref{thm:cool} which gives a characterization of when the motive of a variety is isomorphic to the motive of a projective homogeneous variety.  The proof of this theorem is independent of the characteristic of the base field and might be useful for other applications.

\subsection{Notations}
Throughout this paper $k$ is a perfect field of characteristic $p>3$ and  $K$ denotes the algebraic closure of $k$. $\mathbb{G}_m$ denotes the usual multiplicative group.  $G$ denotes a semi-simple algebraic group of inner type over $k$.   The set of vertices  of the Dynkin diagram of $G$ (or equivalently the set of conjugacy classes of maximal parabolics in $G_K$) is denoted by $\Delta_G$. For a field extension $E$ of $k$,  $\tau_{E} \subseteq \Delta_G$ denotes the subset  that contain  the classes of those maximal parabolics in $G_K$  defined over $E$.  Given a parabolic subgroup scheme $\widetilde{P}$,  $P$ denotes the underlying reduced subscheme.   If $\widetilde{X}$ is a projective  pseudo-homogeneous variety then $X$ denotes the  corresponding projective homogeneous variety.\\
\indent $\Lambda$ denotes a  connected, finite, associative unital  commutative ring. An example to keep in mind is a finite field of some prime characteristic.   Let $Chow(k, \Lambda)$ denote the category of Chow motives over $k$  with coefficients in $\Lambda$. Detailed exposition of $Chow(k, \Lambda)$ can be found in \cite{elman_quadratic}. For a variety $X$, $\mathcal{M}(X)$ denotes the Chow motive of $X$. By $Ch_i(X)$ and $Ch^i(X)$ we mean the $i^{th}$ Chow group of $X$ graded by dimension and codimension respectively.   The Tate motive $\mathcal{M}(Spec~k)\{i\}$ is denoted by $\Lambda\{i\}$ (The notation  $\Lambda\{i\}$ is equal to $\Lambda(i)[2i]$  in Voevodsky's category of motives). For a motive $M$, $M\{i\}:=M \otimes \Lambda\{i\}$. \\

 \subsection{Statements of Main Results}
We say that \emph{Krull-Schmidt principle} holds for an object in an additive category if it  is isomorphic uniquely to direct sum of indecomposable summands (up to permutation). Let $X$ be a $k$-variety. Recall from Karpenko's paper \cite{upperkarpenko} that a summand $M$ of  $\mathcal{M}(X)$ is called \emph{upper} if $Ch^0(M) \neq 0$. See Lemma 2.8 in \cite{upperkarpenko} for more details. If the motive of $X$  satisfies Krull-Schmidt principle, let $U_X$ denote the unique upper indecomposable summand of $\mathcal{M}(X)$. It is well known that the motives of projective homogeneous varieties satisfy Krull-Schmidt principle (see Corollary \ref{cor:krull_homo}) in $Chow(k, \Lambda)$.  If $X_{\tau}$ is projective homogeneous corresponding to the subset $\tau \subseteq \Delta_G$ (see \S\ref{sec:homog}), we write $U_{\tau}$ for the upper indecomposable summand of  $\mathcal{M}(X_{\tau})$.

\begin{thm}\label{thm:nilpotence}(Rost Nilpotence for Projective Pseudo-Homogeneous Varieties)
Let $\widetilde{X}$ be a projective pseudo-homogeneous variety for a semi-simple group $G$ of inner type over $k$.  Then the kernel of the base change map
\begin{equation*}
\begin{split}
End(\mathcal{M}(\widetilde{X}))  &\rightarrow   End(\mathcal{M}(\widetilde{X}_K))\\
f   &\mapsto  f \otimes K
\end{split}
\end{equation*}
consists of nilpotents. 
\end{thm}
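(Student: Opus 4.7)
The plan is to reduce Rost nilpotence for $\widetilde{X}$ to the corresponding (known) statement for the projective homogeneous variety $X$, via the canonical $G$-equivariant morphism $\theta\colon X \to \widetilde{X}$. The key structural input is that for every field extension $L/k$, the base change $\theta_L\colon X_L \to \widetilde{X}_L$ is a finite surjective universal homeomorphism of smooth projective varieties of the same dimension. Indeed, over an algebraic closure it becomes the natural quotient $G/P \to G/\widetilde{P}$, and since $P$ is the underlying reduced subscheme of $\widetilde{P}$, the kernel $\widetilde{P}/P$ is an infinitesimal group scheme; hence $\theta$ is purely inseparable of degree a power of $p$.

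I would then use the product morphism $\theta\times\theta\colon X\times X\to\widetilde{X}\times\widetilde{X}$ to define a pullback on correspondences. Since $\theta$ preserves dimension, pullback along $\theta\times\theta$ carries $End(\mathcal{M}(\widetilde{X}))$ into $End(\mathcal{M}(X))$. A routine use of the flat base change formula along the smooth projections $\pi_{ij}$ of the triple products $X^{3}\to\widetilde{X}^{3}$ shows that this pullback is a ring homomorphism with respect to composition of correspondences, and it manifestly commutes with the base change to $K$. Hence if $f\in End(\mathcal{M}(\widetilde{X}))$ satisfies $f\otimes K=0$, then $(\theta\times\theta)^{*}f$ lies in the analogous kernel for $X$; by Rost nilpotence for projective homogeneous varieties (a theorem of Chernousov--Gille--Merkurjev), some power $(\theta\times\theta)^{*}(f^{N})=\bigl((\theta\times\theta)^{*}f\bigr)^{N}$ vanishes.

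It remains to deduce that $f^{N}$ (or a further power) is itself zero---equivalently, that $\ker(\theta\times\theta)^{*}$ is a nil two-sided ideal of $End(\mathcal{M}(\widetilde{X}))$. The projection formula gives $(\theta\times\theta)_{*}\circ(\theta\times\theta)^{*}=\deg(\theta\times\theta)=p^{2n}$, so this kernel is $p^{2n}$-torsion, which settles the case when $p$ is invertible in $\Lambda$. The main obstacle is the complementary regime in which $p$ is a zero divisor of $\Lambda$ (for instance $\Lambda=\mathbb{F}_{p}$), where this bound is vacuous. In that case I would proceed by factoring $\theta$ along a chain $P=P_{0}\subset P_{1}\subset\cdots\subset P_{n}=\widetilde{P}$ whose successive quotients are height-one infinitesimal group schemes (such as Frobenius kernels of root subgroups), inductively reducing to the case of a height-one extension, and exploiting that $End(\mathcal{M}(\widetilde{X}))$ is a finite ring---whose Jacobson radical is automatically nilpotent---to conclude that the offending powers of $f$ lie in the radical.
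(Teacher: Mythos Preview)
Your central claim, that the pullback $(\theta\times\theta)^{*}$ is a ring homomorphism for composition of correspondences, is false, and this is not a matter of a missing routine verification. The relevant square
\[
\begin{array}{ccc}
X\times X\times X & \xrightarrow{\ q_{13}\ } & X\times X\\
\big\downarrow{\scriptstyle\theta^{3}} & & \big\downarrow{\scriptstyle\theta\times\theta}\\
\widetilde{X}\times\widetilde{X}\times\widetilde{X} & \xrightarrow{\ p_{13}\ } & \widetilde{X}\times\widetilde{X}
\end{array}
\]
is \emph{not} Cartesian: the fibre product is $X\times\widetilde{X}\times X$, and $\theta^{3}$ factors through it via $\mathrm{id}\times\theta\times\mathrm{id}$. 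Applying flat base change to the Cartesian square and then the projection formula to the extra map gives
\[
(\theta\times\theta)^{*}\beta\ \circ\ (\theta\times\theta)^{*}\alpha \;=\; \deg(\theta)\cdot(\theta\times\theta)^{*}(\beta\circ\alpha),
\]
with $\deg(\theta)=p^{n}$. So $(\theta\times\theta)^{*}$ is multiplicative only up to this factor. When $p\in\Lambda^{\times}$ you can rescale and your argument goes through, but precisely in the interesting case $p\notin\Lambda^{\times}$ the map is not a ring homomorphism at all, and you cannot even conclude that $(\theta\times\theta)^{*}(f^{N})=0$ from nilpotence of $(\theta\times\theta)^{*}f$.

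Your proposed fallback does not close this gap. Factoring $\theta$ through height-one steps only replaces one power of $p$ by another; each step has the same defect. The observation that the Jacobson radical of a finite ring is nilpotent is true but disconnected from the problem: you have given no reason why $f$ (or any power of it) should lie in the radical of $End(\mathcal{M}(\widetilde{X}))$.

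The paper avoids comparison with $X$ altogether. It first establishes, via the $\mathbb{G}_{m}$-action coming from a split torus in $G$, a Bia{\l}ynicki--Birula type decomposition of $\mathcal{M}(\widetilde{X})$ into twisted motives of projective pseudo-homogeneous varieties for the anisotropic kernel. It then runs the Chernousov--Gille--Merkurjev/Brosnan induction directly on $\widetilde{X}$: one does descending induction on the number of pieces in this decomposition, passing to the function field of an anisotropic piece (which increases the count) and invoking Brosnan's nilpotence theorem for the resulting residue-field extension. This argument is insensitive to whether $p$ is invertible in $\Lambda$.
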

\begin{proof} See \S \ref{sec:rost}.
\end{proof}

\begin{thm}\label{thm:krull_pseudo}
The Krull-Schmidt principle holds for any shift of any summand of the motive of a projective pseudo-homogeneous  variety for $G$.
\end{thm}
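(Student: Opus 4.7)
The plan is to deduce the statement from Theorem~\ref{thm:nilpotence} by applying the standard Krull--Schmidt criterion of Chernousov--Gille--Merkurjev and Karpenko: for a motive $M$ over $k$, if the kernel of the base change $\operatorname{End}(M) \to \operatorname{End}(M_K)$ is nil and $\operatorname{End}(M_K)$ is a finite $\Lambda$-algebra, then $\operatorname{End}(M)$ is semi-perfect, whence the Krull--Schmidt principle holds for $M$. Both hypotheses pass to shifts of summands (the first by conjugating with the cutting projector; the second because a summand of a Tate decomposition stays Tate), so it suffices to verify the hypotheses for $\mathcal{M}(\widetilde{X})$ itself.

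Hypothesis (i) is exactly Theorem~\ref{thm:nilpotence}. Hypothesis (ii) amounts to showing that $\mathcal{M}(\widetilde{X}_K)$ is a finite direct sum of Tate motives, or equivalently that $\widetilde{X}_K$ is cellular over $K$. This is the key geometric input. Over $K$, one has the canonical $G_K$-equivariant map $\theta_K : X_K \to \widetilde{X}_K$, and $X_K = G_K/P$ is cellular by the classical Bruhat decomposition. I would prove cellularity of $\widetilde{X}_K$ by transporting this structure: choose a Borel $B \subseteq \widetilde{P}$, project the Bruhat stratification of $G_K/B$ down to $\widetilde{X}_K$, and verify that each stratum is isomorphic (and not merely universally homeomorphic) to an affine space. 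The subtlety, emphasized in Lauritzen's work, is that the non-reducedness of $\widetilde{P}$ can change the scheme structure of $\widetilde{X}_K$ relative to $X_K$; one must argue that within each Bruhat cell the local triviality of the $\widetilde{P}/B$-bundle already pins down the cell as an affine space.

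Combining (i) and (ii), the ring $\operatorname{End}(\mathcal{M}(\widetilde{X}))$ is an extension of a finite semi-simple $\Lambda$-algebra by a nil ideal, hence semi-perfect, and a standard lifting-of-idempotents argument then produces the unique indecomposable decomposition for $\mathcal{M}(\widetilde{X})$ and for every shift of every summand. The main obstacle is the geometric step: ensuring that the non-reducedness of $\widetilde{P}$ does not destroy the Tate-motivic structure of $G_K/\widetilde{P}$. Once this is secured, the Krull--Schmidt conclusion is formal.
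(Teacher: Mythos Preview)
Your proposal is correct and follows essentially the same route as the paper: the paper's proof cites Theorem~\ref{thm:nilpotence} (Rost nilpotence), Theorem~\ref{thm:rank} (which establishes that $\widetilde{X}_K$ is cellular and hence $\mathcal{M}(\widetilde{X}_K)$ is a sum of Tate motives, via exactly the ``push the Bruhat cells through $\theta$'' argument you outline), and then Karpenko's general Krull--Schmidt criterion (Theorem~\ref{thm:krull}). The only refinement worth noting is that the paper secures affineness of each image cell $\theta(C(w))$ not via a bundle-triviality argument but by observing that $\theta(C(w))$ is a homogeneous space for the unipotent radical $U$ of $B$, hence affine by \cite{DG}, IV \S4, Corollary~3.16.
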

\begin{proof}
This follows from Theorem  \ref{thm:nilpotence},  Theorem \ref{thm:krull} and Theorem  \ref{thm:rank}.
\end{proof}

The following theorem gives a characterization of when the motive of a variety is isomorphic to the motive a projective homogeneous variety and is independent of the characteristic of the base field $k$. In particular, it holds for characteristic zero as  well. Recall  that a $k$-variety $Z$  is geometrically split if $\mathcal{M}(Z_K)$ is isomorphic to a direct sum of Tate motives.
\begin{thm}
\label{thm:cool}
 Let $X$ be projective $G$-homogeneous variety  over $k$. Let $Z$ be any  geometrically split projective $k$-variety whose motive satisfies the Rost nilpotence principle such that the following holds in $Chow(k, \Lambda)$:
\begin{enumerate}
\item $U_X \simeq U_Z$
\item $\mathcal{M}(X_L) \simeq \mathcal{M}(Z_L)$ where $L = k(X)$
\end{enumerate}
Then $\mathcal{M}(X) \simeq \mathcal{M}(Z)$.
\end{thm}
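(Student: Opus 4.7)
The plan is to exploit the Krull--Schmidt principle for both $\mathcal{M}(X)$ and $\mathcal{M}(Z)$, together with hypotheses (1) and (2), to identify the indecomposable summands of $\mathcal{M}(X)$ inside $\mathcal{M}(Z)$ via a Karpenko-style cancellation argument. Since $X$ is projective $G$-homogeneous, the Chernousov--Gille--Merkurjev decomposition theorem gives $\mathcal{M}(X) = U_X \oplus \bigoplus_{i \geq 1} U_{X_i}(a_i)$, with $U_X$ occurring once and each $X_i$ a projective $G$-homogeneous variety that acquires a rational point over $L = k(X)$; moreover Krull--Schmidt holds. For $\mathcal{M}(Z)$, Krull--Schmidt follows from the geometric splitness of $Z$ together with the nilpotence principle (by the standard theorem, see e.g.\ \cite{elman_quadratic}).

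By (1), $U_X \cong U_Z$ is a common indecomposable summand, so I may write $\mathcal{M}(X) = U_X \oplus M$ and $\mathcal{M}(Z) = U_X \oplus N$. Base-changing to $L = k(X)$ and using (2) yields $U_{X,L} \oplus M_L \cong U_{X,L} \oplus N_L$ in $Chow(L,\Lambda)$. Since both $X_L$ and $Z_L$ remain geometrically split with nilpotence, Krull--Schmidt holds over $L$ as well, so each indecomposable summand of $U_{X,L}$ may be cancelled from both sides to obtain $M_L \cong N_L$. The goal is then to descend this to $M \cong N$ over $k$.

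The descent step is the main obstacle and is handled by iteration, taking upper motives one at a time. At each stage I consider the upper summand $U_{X_1}(a_1)$ of $M$ (say with $a_1$ minimal) and show it occurs as a summand of $N$ over $k$. Over $L$ this is known from $M_L \cong N_L$; to produce a rational correspondence over $k$ I use that $X_1$ has a rational point over $L$, so the relevant cycle on $X_1 \times Z$ extends from the generic fibre over $X$ back to $k$ via the localization sequence for Chow groups, with the extension being well-defined up to terms supported on a proper closed subset. The nilpotence principle for $Z$ together with Theorem \ref{thm:nilpotence}-style arguments for $X_1$ then lifts the resulting idempotent to an honest idempotent of $\mathcal{M}(Z)$ whose image is isomorphic to $U_{X_1}(a_1)$. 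Splitting off this summand and iterating on the remaining complement of $\mathcal{M}(Z)$ (which still satisfies the hypothesis of being geometrically split with nilpotence) produces all the indecomposable summands of $\mathcal{M}(X)$ inside $\mathcal{M}(Z)$.

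Once every summand of $\mathcal{M}(X)$ has been found as a summand of $\mathcal{M}(Z)$, I compare ranks over the algebraic closure $K$: by (2), $\mathcal{M}(X_K) \cong \mathcal{M}(Z_K)$ as direct sums of Tate motives, so the residual complement of $\mathcal{M}(Z)$ must be zero, giving $\mathcal{M}(X) \cong \mathcal{M}(Z)$ by Krull--Schmidt over $k$. The hard part throughout is the rationality of correspondences in the descent step; all other ingredients (cancellation, Krull--Schmidt, rank comparison) are formal given the hypotheses.
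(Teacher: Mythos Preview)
Your high-level strategy matches the paper's: induct through the indecomposable summands of $\mathcal{M}(X)$ (each of the form $U_Y(i)$ for some projective $G$-homogeneous $Y$ with $\tau_L\subseteq\tau$), and at each stage produce a matching summand inside $\mathcal{M}(Z)$, using Krull--Schmidt on both sides. The base case is hypothesis~(1), and hypothesis~(2) tells you which summand of $\mathcal{M}(Z)$ to aim for.

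The gap is in the descent step, which you correctly flag as the hard part but do not actually carry out. Your description ``the relevant cycle on $X_1\times Z$ extends from the generic fibre over $X$ back to $k$ via the localization sequence'' cannot work as stated: localization lifts a class from $(X_1\times Z)_L$ only to $X_1\times Z\times X$, not to $X_1\times Z$. You still have an extra factor of $X$ to get rid of, and $X$ has no $k$-point in general. The paper deals with this asymmetrically for the two directions. For $\alpha:\mathcal{M}(Y)(i)\to\widetilde{M}$ one uses not that $Y$ has a $k(X)$-point (your direction) but that $X$ has a $k(Y)$-point; this makes $L(Y)/k(Y)$ purely transcendental, so a class on $(Y\times Z)_L$ restricted to $\operatorname{Spec} L(Y)\times Z$ is already $k(Y)$-rational and lifts to $Y\times Z$ directly. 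For $\beta:\widetilde{M}\to\mathcal{M}(Y)(i)$ one does extend to $Z\times X\times Y$ as you suggest, but then eliminates the $X$ factor by pushing forward along the multiplicity-$1$ correspondence $\Gamma\in A_{\dim Z}(Z\times X)$ supplied by hypothesis~(1); this is the second essential use of $U_X\simeq U_Z$, beyond the base case. Finally, one checks $\operatorname{mult}(\beta\circ\alpha)=1$ and invokes Karpenko's Lemma~2.14/Corollary~2.15 rather than lifting idempotents directly. None of these moves appears in your sketch, and without them the argument does not close.
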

\begin{proof} See \S \ref{sec:final1}.
\end{proof}

\begin{remark}
In the above theorem,  $\mathcal{M}(Z)$ satisfies Krull-Schmidt by Theorem \ref{thm:krull} and hence the upper motive $U_Z$ of $Z$ is well-defined.
\end{remark}

As an application of the above theorem we derive the following main result.
\begin{thm}
\label{thm_motives}
Let  $\widetilde{X}$ be a projective pseudo-homogeneous variety for $G$ and let $X$ be the corresponding projective homogeneous variety. Then in the category of motives $Chow(k,\Lambda)$
\begin{eqnarray*}
\mathcal{M}(X)\simeq \mathcal{M}(\widetilde{X})
\end{eqnarray*}
In particular, by Theorem \ref{thm:karpenko}  every indecomposable summand in $\mathcal{M}(\widetilde{X})$ is a shift of some upper motive $U_\tau$  satisfying $\tau_{k(X)} \subseteq \tau$.
\end{thm}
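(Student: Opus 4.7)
My plan is to apply Theorem \ref{thm:cool} with $Z = \widetilde{X}$; this requires checking that $\widetilde{X}$ is geometrically split, satisfies Rost nilpotence, and satisfies the two numbered hypotheses of Theorem \ref{thm:cool}. Nilpotence for $\widetilde{X}$ is exactly Theorem \ref{thm:nilpotence}. For geometric splitness, the canonical map $\theta_K\colon G_K/P \to G_K/\widetilde{P}$, induced by $P \hookrightarrow \widetilde{P}$, is a finite surjective universal homeomorphism of $p$-power degree (its fibres are copies of the infinitesimal group scheme $\widetilde{P}/P$), so it transports the Bruhat decomposition of $G_K/P$ onto an affine-space cell decomposition of $G_K/\widetilde{P}$ of the same combinatorial type, yielding
\[
\mathcal{M}(\widetilde{X}_K) \simeq \bigoplus_{w \in W/W_P} \Lambda(\ell(w)) \simeq \mathcal{M}(X_K).
\]

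For hypothesis (2), set $L = k(X)$. The generic point of $X$ provides an $L$-rational point of $X_L$, and $\theta_L$ sends it to an $L$-point of $\widetilde{X}_L$. Existence of a rational point forces the upper indecomposable summand of each motive to be the trivial Tate $\Lambda$. Combining this with geometric splitness, the Krull--Schmidt principle (Theorem \ref{thm:krull_pseudo}), and the structure theorem cited in the final clause of the theorem, applied inductively on $\dim X$ to the smaller subvarieties $X_\tau, \widetilde{X}_\tau$ that appear (for $\tau \supsetneq \tau_{k(X)}$), both $\mathcal{M}(X_L)$ and $\mathcal{M}(\widetilde{X}_L)$ decompose into matching combinatorial sums of shifts of the inductively-matched upper motives $U_{X_\tau} \simeq U_{\widetilde{X}_\tau}$, giving $\mathcal{M}(X_L) \simeq \mathcal{M}(\widetilde{X}_L)$.

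For hypothesis (1), the graph of $\theta$ and its transpose give correspondences between $\mathcal{M}(X)$ and $\mathcal{M}(\widetilde{X})$. Rost nilpotence reduces the comparison to a calculation over $K$, where both motives split identically into Tate motives and the upper summand on each side is pinpointed by the $Ch^0 \neq 0$ condition; Krull--Schmidt uniqueness (Theorem \ref{thm:krull_pseudo}) then matches these upper summands in $Chow(k,\Lambda)$, giving $U_X \simeq U_{\widetilde{X}}$.

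The main obstacle is the matching of upper summands in hypothesis (1) when $\Lambda$ has characteristic $p$: then $\deg\theta = p^n$ is zero in $\Lambda$, so the correspondences coming from $\theta$ cannot be inverted directly. The identification must be handled indirectly by combining Rost nilpotence, Bruhat analysis over $K$, and Krull--Schmidt uniqueness. Once $U_X \simeq U_{\widetilde{X}}$ and $\mathcal{M}(X_L) \simeq \mathcal{M}(\widetilde{X}_L)$ are in hand, Theorem \ref{thm:cool} concludes the desired isomorphism $\mathcal{M}(X) \simeq \mathcal{M}(\widetilde{X})$, and the last clause of the theorem follows from the analogous description for the projective homogeneous variety $X$.
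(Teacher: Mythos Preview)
Your overall strategy of verifying the hypotheses of Theorem \ref{thm:cool} is correct and matches the paper, but both of the key verifications contain genuine gaps.

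\textbf{Hypothesis (1), $U_X \simeq U_{\widetilde X}$.} You correctly identify the obstacle: the transpose of $\Gamma_\theta$ has multiplicity $\deg\theta = p^n$, which vanishes in $\Lambda$ when $\operatorname{char}\Lambda = p$. Your proposed workaround---``Rost nilpotence reduces the comparison to a calculation over $K$, \dots Krull--Schmidt uniqueness then matches these upper summands in $Chow(k,\Lambda)$''---does not work. Rost nilpotence and Krull--Schmidt let you lift \emph{decompositions} from $K$ to $k$; they do \emph{not} let you conclude that two a priori unrelated $k$-motives are isomorphic merely because they become isomorphic over $K$. (If they did, every twisted form of a cellular variety would have the same motive as its split form.) What is actually needed is a multiplicity-one correspondence $\widetilde X \to X$. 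The paper produces one by first showing (Lemma \ref{lem:point}, via Tits-index considerations and perfect closure) that $X$ acquires a $k(\widetilde X)$-rational point, then taking the closure of the resulting rational map $\widetilde X \dashrightarrow X$. This step is the real content of Corollary \ref{cor:upper} and is missing from your argument.

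\textbf{Hypothesis (2), $\mathcal M(X_L)\simeq \mathcal M(\widetilde X_L)$.} Your inductive scheme is not well-founded. You invoke ``the structure theorem cited in the final clause'' (Theorem \ref{thm:karpenko}) to decompose $\mathcal M(\widetilde X_L)$ into shifts of $U_{\widetilde X_\tau}$, but that theorem applies only to projective \emph{homogeneous} varieties; for $\widetilde X$ it is part of the conclusion, not an available input. Moreover, even on the $X$-side, Karpenko's theorem only tells you which $U_\tau$ can occur, not their multiplicities and shifts, so there is nothing to ``match combinatorially''. The paper's argument avoids this by using Corollary \ref{cor:decomp2}: over $L=k(X)$ the group $G_L$ is isotropic, and the Bia\l{}ynicki-Birula decomposition gives \emph{parallel} splittings $\mathcal M(X_L)=\coprod \mathcal M(Z_i)(a_i)$ and $\mathcal M(\widetilde X_L)=\coprod \mathcal M(\widetilde Z_i)(a_i)$ in which each $(Z_i,\widetilde Z_i)$ is again a corresponding pair, now for the anisotropic kernel $G'$ of $G_L$, which has strictly smaller \emph{rank}. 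Induction on $\operatorname{rank}(G)$ (not $\dim X$) then gives $\mathcal M(Z_i)\simeq\mathcal M(\widetilde Z_i)$ and hence hypothesis (2).
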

\begin{proof} See \S \ref{sec:final2}.
\end{proof}

Let $A$ be a central simple algebra of degree $n$ over $k$.   Let $X=X(d_1, d_2, \cdots, d_m , A)$ be the variety of right ideals of reduced dimensions $1 \leq d_1 <d_2 < \cdots< d_m \leq n$. Note that $X$ is projective homogeneous for $G= PGL(A)$. Write $X_K \simeq G/P$ for some parabolic subgroup $P$.   Let $A^{(p)} = A \otimes_{Fr} k$  and $X^{(p)} = X \times _{Fr} Spec~k$ where $Fr: k \rightarrow k$ is the Frobenius morphism. Then it is easy to see that ${X^{(p)}}_K \simeq G/\widetilde{P}$ where $\widetilde{P} = G_{p} P$ and $G_{p}$ is the kernel of the Frobenius morphism $Fr: G \rightarrow G^{(p)}$. Moreover, $X$ is  the projective homogeneous variety corresponding to $X^{(p)}$.\\
\indent An easy consequence of Theorem \ref{thm_motives} is the following.
\begin{cor}
\label{cor:csa}
 For a central simple algebra $A$ over $k$ of degree $n$, let $B$ denote the central simple algebra of degree $n$ that is  Brauer equivalent to $A^{\otimes p}$. Then  in the category $Chow(k, \Lambda)$,  the motives of twisted flag varieties $X(d_1, d_2, \cdots, d_m , A)$ and $X(d_1, d_2, \cdots, d_m , B)$ are isomorphic. That is, 
\begin{align*}
\mathcal{M}(X(d_1, d_2, \cdots, d_m , A)) \simeq \mathcal{M}(X(d_1, d_2, \cdots, d_m , B))
\end{align*}
Taking $m=1$,  we get $ \mathcal{M}(SB_d(A)) \simeq \mathcal{M}(SB_d(B))$ for twisted Grassmannians. In particular, for the case of  Severi-Brauer varieties we have $ \mathcal{M}(SB(A)) \simeq \mathcal{M}(SB(B))$.
\end{cor}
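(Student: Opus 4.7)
The plan is to apply Theorem \ref{thm_motives} directly to the pair $(X, X^{(p)})$, where $X = X(d_1, \dots, d_m, A)$ is the twisted flag variety for $A$ and $X^{(p)} = X \times_{Fr} \operatorname{Spec} k$ is its Frobenius twist. The paragraph preceding the corollary has already done the crucial setup: it observes that $X^{(p)}$ is projective pseudo-homogeneous for $G = PGL(A)$ (via the non-reduced parabolic $\widetilde{P} = G_p P$) and that the associated reduced projective homogeneous variety is exactly $X$. Hence Theorem \ref{thm_motives} applies and gives $\mathcal{M}(X) \simeq \mathcal{M}(X^{(p)})$.

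What remains is a purely algebraic identification: I claim that $X^{(p)} \cong X(d_1, \dots, d_m, B)$ as $k$-varieties. The variety $X(d_1,\dots,d_m,A)$ is defined functorially in the central simple algebra $A$ (as right ideals of prescribed reduced dimensions), so the base change along the absolute Frobenius $Fr: k \to k$ takes it to $X(d_1,\dots,d_m,A^{(p)})$, where $A^{(p)} := A \otimes_{k,Fr} k$ is the Frobenius-twisted algebra. Thus
\begin{equation*}
X^{(p)} \;\cong\; X(d_1,\dots,d_m, A^{(p)}).
\end{equation*}

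The final ingredient is to identify $A^{(p)}$ with $B$. It is standard that the Frobenius twist realizes multiplication by $p$ on the Brauer group: $[A^{(p)}] = p[A] = [A^{\otimes p}]$ in $\operatorname{Br}(k)$. Since $A^{(p)}$ and $B$ are both central simple $k$-algebras of the same degree $n$ lying in the same Brauer class, they are isomorphic as $k$-algebras. Therefore $X(d_1,\dots,d_m, A^{(p)}) \cong X(d_1,\dots,d_m, B)$, and chaining the isomorphisms yields
\begin{equation*}
\mathcal{M}(X(d_1,\dots,d_m,A)) \;\simeq\; \mathcal{M}(X^{(p)}) \;\simeq\; \mathcal{M}(X(d_1,\dots,d_m,B)),
\end{equation*}
which is the claim. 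The specializations to twisted Grassmannians ($m=1$) and Severi--Brauer varieties ($m=1$, $d_1=1$) are immediate. Given that Theorem \ref{thm_motives} has already been established, there is no genuine obstacle here; the only minor point requiring care is the functoriality of the flag variety construction under Frobenius base change and the identification $[A^{(p)}] = p[A]$ in the Brauer group, both of which are standard.
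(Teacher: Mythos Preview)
Your proof is correct and follows essentially the same route as the paper: identify $A^{(p)}$ with $B$ via the Brauer-class computation $[A^{(p)}]=p[A]$, use functoriality of the flag-variety construction under Frobenius base change to get $X(d_1,\dots,d_m,A)^{(p)}\cong X(d_1,\dots,d_m,A^{(p)})$, and then apply Theorem~\ref{thm_motives}. The paper just cites explicit references for the Brauer-group fact, but otherwise the argument is the same.
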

\begin{proof}
Note that  $B = A^{(p)}$  by  Theorem 3.9 in \cite{brauer_p} (see also Proposition 3.2 in  \cite{florence}).  Therefore, 
\begin{align*}
\mathcal{M}(X(d_1, d_2, \cdots, d_m , B)) &\simeq \mathcal{M}(X(d_1, d_2, \cdots, d_m , A^{(p)}))\\
&\simeq \mathcal{M}(X(d_1, d_2, \cdots, d_m ,A)^{(p)}) ~~~~~~~~(\text{by functoriality of the Frobenius})\\
&\simeq \mathcal{M}(X(d_1, d_2, \cdots, d_m ,A)) ~~~~~~~~~~~~~(\text{by Theorem \ref{thm_motives}})
\end{align*}

The rest follows easily.
\end{proof}

\begin{remark} Let $A$ be a central simple algebra over $k$   with exponent (i.e., the order of its Brauer class as an element in the  Brauer group) not dividing $p^2 -1$.  Let $X = SB(A)$ be the Severi-Brauer variety associated with $A$ and let $X^{(p)} = SB(A)^{(p)} \simeq SB(A^{(p)})$.  Then by Corollary \ref{cor:csa},  $\mathcal{M}(X)$ and $\mathcal{M}(X^{(p)})$ are isomorphic in $Chow(k, \Lambda)$ for all coefficient  rings $\Lambda$ that are finite fields (of any characteristic). But they are not isomorphic in the integral Chow motive category  $Chow(k, \mathbb{Z})$. Indeed, if they were isomorphic in $Chow(k, \mathbb{Z})$,  Criterion 7.1 in \cite{karpenko_mequiv} would imply that  $A^{(p)}$ is isomorphic either  to $A$ or its opposite $A^{op}$ .  Since $A^{(p)}$ is Brauer equivalent to  $A^{\otimes p}$  by Proposition 3.2 in  \cite{florence}, this contradicts our assumption on the exponent of $A$. Therefore we get examples of varieties whose motives are isomorphic over all finite field coefficients but not over integral coefficients. 
\end{remark}

\subsection{Outline}
 In \S\ref{sec:prelim} we briefly recall the  facts known about projective homogeneous varieties and their motives.   In \S\ref{sec:iso} we give  motivic decompositions of projective  pseudo-homogeneous varieties  for isotropic $G$ and relate them to corresponding projective homogeneous varieties. In \S\ref{sec:rost} we prove that Rost nilpotence holds for such varieties. In \S\ref{sec:split}  we study their cellular structure and compute their motives when $G$ is split. Finally, in \S\ref{sec:final} we compute the  motivic decompositions of projective pseudo-homogeneous variety and relate them to the decompositions of corresponding projective homogeneous varieties.

\section{Preliminaries}
\label{sec:prelim}
\indent  Projective pseudo-homogeneous varieties are extensively studied in the literature when  $k = K$ is algebraically closed. We give a brief survey on what is known so far.  In \cite{wenzel}, Wenzel has classified all parabolic subgroup schemes $\widetilde{P}$ and in \cite{rationality} he proved that  the varieties $G/\widetilde{P}$ are rational. Using this classification, de Salas in \cite{desalas}  has classified all $G/\widetilde{P}$. The varieties  of the form $G/\widetilde{P}$ where $\widetilde{P}$ is any parabolic subgroup scheme that may or may not be reduced  are known as  \emph{parabolic varieties}  in \cite{desalas}.     Lauritzen and Haboush answered many interesting questions about the geometry of these varieties including canonical line bundles, vanishing theorems and Frobenius splitting in \cite{lauritzen},  \cite{vufs} and \cite{splitting_lauritzen}.  Lauritzen also gave a geometric construction  of $G/\widetilde{P}$  in \cite{lauritzen_embedding} where he realizes these varieties as  the $G$-orbit of a  Borel stable line in projective space. They  have rich structure and behave quite differently  from the analogous  {\em generalized flag varieties} (or simply \emph{flag varieties})  $G/P$ where $P$ is smooth.  For example, in \cite{splitting_lauritzen}, Lauritzen  has shown that under mild assumptions on $G$, $G/\widetilde{P}$ is isomorphic to a flag variety if and only if $G/\widetilde{P}$ is Frobenius split.  The varieties  of the form $G/\widetilde{P}$  that  it does not admit an isomorphism to a flag variety  are known as \emph{varieties of unseparated flags} or simply \emph{vufs} in \cite{vufs}.   In particular, $G/P$ and $G/\widetilde{P}$ are not isomorphic in general. Moreover, in  \cite{vufs} one can find explicit examples of VUFs  which illustrate  that  unlike  generalized flag varieties, vanishing theorem for ample line bundles and Kodaira's vanishing theorem break down. So over algebraically closed fields,  although these varieties exhibit a lot of strange phenomena, they  are well understood and it is straightforward to compute their Chow motives (see \S\ref{sec:split}).  \\
\indent However, when $k$ is not algebraically closed, nothing much is known about them unlike the analogous  projective homogeneous varieties. Projective homogeneous varieties are quite thoroughly studied in the literature (\cite{artin}, \cite{gille_book},  \cite{elman_quadratic} and \cite{boi}) and so are their  Chow motives  (\cite{bro},  \cite{calmes}, \cite{chernosov_iso}, \cite{outerkarpenko} and  \cite{upperkarpenko}).  Therefore it is natural to study  projective pseudo-homogeneous varieties  and ask if they exhibit any similarity to projective homogeneous varieties.
\subsection{Projective Homogeneous Varieties}
\label{sec:homog}
 In this section we recall some facts known about projective homogeneous varieties. A $G$-variety $X$ is called a \emph{projective homogeneous variety} if $X_K \simeq G/P$ for some parabolic subgroup $P$ (which by definition is smooth). \\
 \indent  The subsets of $\Delta_G$ are in natural one-to-one correspondence with the set of conjugacy classes of  parabolic subgroups in $G_K$  defined as follows: the conjugacy class corresponding $\tau \subseteq \Delta_G$ is the one containing the intersection of all maximal parabolics in $\tau$ that contain a given Borel $B$ in $G_K$. For any subset $\tau \subseteq \Delta_G$, we write $X_{\tau}$ or$X_{\tau, G}$ for the projective homogeneous variety of parabolic subgroups in $G$ of the type $\tau$. For instance, $X_{\Delta_G}$ is the variety of the Borel subgroups. Any projective $G$-homogeneous variety is isomorphic to $X_{\tau}$ for some $\tau$.   Let us recall some of the results known about the motives of projective homogeneous varieties. \\
 \indent In \cite{bro}, Brosnan gave a description about the summands of the motive of projective $G$-homogeneous varieties for isotropic $G$.
\begin{thm}(Corollary 4.1 in \cite{bro})
Let $X$ be a projective $G$-homogeneous variety over $k$. Assume $G$ is isotropic and let $\lambda: \mathbb{G}_m \rightarrow G$ be an embedding of a $k$-split torus.  Then
\begin{align*}
\mathcal{M}(X) =  \coprod \mathcal{M}(Z_i)\{a_i\}
\end{align*}
where $Z_i$ are  connected components of  the fixed point locus $X^{\lambda}$. Moreover,  $Z_i$ are projective homogeneous for the centralizer $H$  of $\lambda$ and  the twists $a_i$ are the dimensions of the positive eigenspace of the action of $\lambda$ on the tangent space of $X$ at an arbitrary point $z \in Z_i$.
\end{thm}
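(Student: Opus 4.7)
The plan is to apply the Białynicki-Birula decomposition associated to the $k$-defined $\mathbb{G}_m$-action $\lambda$, and then promote the resulting geometric stratification to a direct sum decomposition in $Chow(k, \Lambda)$ via an inductive localization argument.

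First I would invoke Białynicki-Birula: since $X$ is smooth and projective over the perfect field $k$ and $\lambda$ is defined over $k$, the fixed scheme $X^{\lambda}$ is smooth, its connected components $Z_i$ are smooth projective subvarieties, and one obtains a set-theoretic decomposition
\[
X = \bigsqcup_i X_i^+, \qquad X_i^+ = \{ x \in X : \lim_{t \to 0} \lambda(t)\cdot x \in Z_i \},
\]
together with retractions $\pi_i : X_i^+ \to Z_i$ sending $x$ to $\lim_{t \to 0} \lambda(t)\cdot x$. Each $\pi_i$ is a Zariski-locally trivial affine bundle whose rank equals the dimension $a_i$ of the positive-weight subspace of $\lambda$ acting on $T_z X$ for any $z \in Z_i$; this dimension is independent of the point $z \in Z_i$ since the weight spaces vary continuously.

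Next I would order the fixed components so that $\dim X_i^+$ is non-decreasing in $i$. Then the unions $U_n = \bigcup_{i \le n} X_i^+$ form an ascending filtration of $X$ by open subvarieties, with successive complements $U_n \setminus U_{n-1} \cong X_n^+$ that are affine bundles over $Z_n$. Applying the Gysin/localization triangle inductively along this filtration yields an exhaustive filtration of $\mathcal{M}(X)$ whose subquotients are the Tate shifts $\mathcal{M}(Z_i)(a_i)$. The main obstacle is that localization only produces a filtration a priori, so one must exhibit explicit splittings. The key input is that the BB closures $\overline{X_i^+}$ carry canonical cycle classes in $CH^*(X \times X)$: pulling back along $\pi_i$ and pushing forward along the closed embeddings into $X$ defines mutually orthogonal idempotent projectors in $\operatorname{End}(\mathcal{M}(X))$ whose images realize the summands $\mathcal{M}(Z_i)(a_i)$. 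Verifying the idempotence and orthogonality is the technical heart of the argument, and reduces to intersection-theoretic computations on the attractor and repeller bundles.

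Finally, for the $H$-homogeneity claim, the centralizer $H = Z_G(\lambda)$ is a connected reductive $k$-subgroup of $G$ whose action on $X$ preserves each component $Z_i$. Over $K$, the fixed locus $(G/P)^{\lambda}$ decomposes as a disjoint union of $H_K$-orbits indexed by double cosets in $W_H \backslash W / W_P$, each $H_K$-orbit being homogeneous with parabolic stabilizer. This identifies each $Z_i$ as a projective $H$-homogeneous variety, and the integer $a_i$ is then the promised positive-eigenspace dimension computed at any base point of $Z_i$, completing the decomposition.
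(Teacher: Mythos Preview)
Your approach is essentially the one the paper invokes: the result is recorded there as Brosnan's Corollary~4.1, and the paper's own machinery (the cellular/affine-bundle splitting theorem and its $\mathbb{G}_m$-action corollary, stated as Theorems~2.5 and~2.6) packages precisely the Bia\l{}ynicki-Birula--Hesselink stratification together with the Karpenko/Chernousov--Gille--Merkurjev/del~Ba\~no splitting that you are sketching by hand, while the $H$-homogeneity of the fixed components is exactly Brosnan's Theorem~7.1 cited later in the paper. One slip to fix: with $\dim X_i^+$ ordered non-decreasingly the partial unions $\bigcup_{i\le n} X_i^+$ are \emph{closed}, not open (the closure of a plus-cell is contained in plus-cells of no larger dimension), and it is this closed filtration with affine-bundle complements that feeds directly into the splitting theorem.
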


In \cite{bro}, he proved that these varieties also satisfy Rost nilpotence  principle. This is originally due to Chernousov, Gille and Merkurjev (Theorem 8.2 in \cite{chernosov_iso}).
\begin{thm}(Theorem 5.1 in \cite{bro})
\label{thm:rost}
Let $X$ be a projective $G$-homogeneous variety. Then the kernel of the map
\begin{align*}
End(\mathcal{M}(X)) &\rightarrow End(\mathcal{M}(X_K)) \\
f   &\mapsto  f \otimes K
\end{align*}
consists of nilpotent endomorphisms.
\end{thm}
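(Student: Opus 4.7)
The plan is to prove Rost nilpotence by Noetherian induction on $\dim X$, handling the isotropic case via Brosnan's decomposition and reducing the anisotropic case to the isotropic one by a generic-point argument.

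First I would handle the isotropic case. Suppose $G$ contains a nontrivial $k$-split torus $\lambda : \mathbb{G}_m \hookrightarrow G$. Brosnan's decomposition then gives $\mathcal{M}(X) \simeq \bigoplus_i \mathcal{M}(Z_i)(a_i)$ with each $Z_i$ projective homogeneous for the centralizer $H$ of $\lambda$; each $Z_i$ is a proper closed subvariety of $X$, since a nontrivial $\mathbb{G}_m$-action on $G/P$ has proper fixed locus. By the inductive hypothesis each $\mathcal{M}(Z_i)$ satisfies Rost nilpotence. Writing $f \in \operatorname{End}(\mathcal{M}(X))$ as a block matrix $(f_{ij})$ with respect to this decomposition, the hypothesis $f_K = 0$ forces each $(f_{ij})_K = 0$, so each entry is nilpotent over $k$; a standard block-matrix argument then shows that $f$ itself is nilpotent.

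For the anisotropic case, set $L = k(X)$, so that the generic point of $X$ supplies a tautological $L$-point of $X_L$. If $X = X_\tau$, this $L$-point corresponds to a parabolic of type $\tau$ in $G_L$ defined over $L$, so $\tau \subseteq \tau_L$ and $G_L$ becomes $L$-isotropic. The isotropic case then applies over $L$, and since $K$ embeds in $\overline L$, the hypothesis $f_K = 0$ yields $f_{\overline L} = 0$ and hence $f_L^N = 0$ for some $N$.

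The remaining and hardest step is to descend this nilpotence from $L$ back to $k$, which is the role of Rost's original nilpotence lemma. The key observation is that the kernel of the base-change map $\operatorname{End}(\mathcal{M}(X)) \to \operatorname{End}(\mathcal{M}(X_L))$ is, via the localization exact sequence, expressible in terms of correspondences supported on $Z \times X$ for proper closed subvarieties $Z \subsetneq X$. Such correspondences factor motivically through $\mathcal{M}(Z)$ with $\dim Z < \dim X$, and by the inductive hypothesis they generate a two-sided nilpotent ideal of $\operatorname{End}(\mathcal{M}(X))$. Consequently $f^N$ lies in this ideal, so some further power $f^{NM}$ vanishes. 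The careful tracking of how composition of correspondences interacts with the support filtration is the real technical work of the proof.
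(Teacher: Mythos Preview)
The paper does not itself prove this statement; it is quoted from \cite{bro} (originally Chernousov--Gille--Merkurjev). However, in \S\ref{sec:rost} the paper proves the analogous Theorem~\ref{thm:nilpotence} for pseudo-homogeneous varieties, explicitly following Brosnan's argument, so that is the relevant comparison.

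Your outline has the right shape --- decompose, pass to a function field, descend --- but two steps are stated too loosely to go through as written, and the induction is organized differently from the paper.

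In the isotropic case, your ``standard block-matrix argument'' is not actually standard. Knowing that each $(f_{ij})_K=0$ and that each diagonal entry $f_{ii}$ is nilpotent does not force $f$ to be nilpotent: the off-diagonal entries are not endomorphisms, so ``nilpotent'' is meaningless for them, and already a $2\times 2$ permutation matrix has nilpotent diagonal. What one really needs is a \emph{uniform} nilpotence bound for all compositions of elements in the kernel, and supplying that bound is exactly the content of the finer bookkeeping in Brosnan's proof. The paper sidesteps this: it never splits into isotropic/anisotropic cases and never inducts on $\dim X$. It inducts downward on the number $n_L$ of summands in the decomposition of $\mathcal{M}(X_L)$ from Corollary~\ref{cor:decomp2}; this number strictly increases upon passing to the residue field of any point on an anisotropic summand, and is maximal precisely when every summand is a Tate motive (giving the base case for free). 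The decomposition is then used only on the \emph{source} side: for each $i$ one shows that $\mathcal{M}(Z_i)(a_i)\hookrightarrow\mathcal{M}(X)\xrightarrow{f^N}\mathcal{M}(X)$ vanishes for one fixed $N$, and sums. This yields the explicit bound $N=(d+1)^{n_K-n_k}$.

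In your descent step you say the correspondences supported on $Z\times X$ ``by the inductive hypothesis generate a two-sided nilpotent ideal.'' But $Z$ is an arbitrary proper closed subvariety of $X$, not projective homogeneous, so your inductive hypothesis (Rost nilpotence for homogeneous varieties of smaller dimension) says nothing about it. The correct tool here is an \emph{unconditional} nilpotence lemma --- what the paper cites as Theorem~3.1 of \cite{brosnan_nilpotence} --- which, from $g_{k(z)}=0$ for a point $z$ on a $d$-dimensional summand, produces the vanishing of $g^{d+1}$ precomposed with the inclusion of that summand, with no appeal to Rost nilpotence for any auxiliary variety. That lemma, not the outer induction, is the engine of the descent.
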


A very useful consequence of Rost nilpotence is the following result which can be found in Karpenko's paper \cite{upperkarpenko}.

\begin{thm}(Corollary 2.6 in \cite{upperkarpenko})
\label{thm:krull}
Assume that the coefficient ring $\Lambda$ is finite. The Krull-Schmidt principle holds for any shift of any summand of the motive of any geometrically split variety in $Chow(k, \Lambda)$  that satisfies  Rost nilpotence principle. 
\end{thm}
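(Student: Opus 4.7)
The plan is to prove the Krull--Schmidt principle via the classical criterion in pseudo-abelian additive categories: an object $M$ admits a decomposition into indecomposable summands that is unique up to permutation and isomorphism if and only if its endomorphism ring $End(M)$ is semiperfect. Since the Tate shift is an autoequivalence of $Chow(k,\Lambda)$, it suffices to establish semiperfectness of $End(M)$ for an arbitrary summand $M$ of $\mathcal{M}(X)$. We fix an idempotent $p\in End(\mathcal{M}(X))$ splitting off $M$ and observe that $End(M)$ is then the corner ring $p\cdot End(\mathcal{M}(X))\cdot p$, with an analogous description for $End(M_K)$.

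The starting observation is that, since $X$ is geometrically split, the motive $\mathcal{M}(X_K)$ decomposes in $Chow(K,\Lambda)$ as a finite direct sum of Tate motives $\bigoplus_j\Lambda(n_j)$. Because $Hom(\Lambda(i),\Lambda(j))$ equals $\Lambda$ when $i=j$ and vanishes otherwise, $End(\mathcal{M}(X_K))$ is a finite direct sum of copies of $\Lambda$, and therefore a finite set since $\Lambda$ itself is finite. Consequently the corner ring $End(M_K)=p_K\cdot End(\mathcal{M}(X_K))\cdot p_K$ is also finite, hence Artinian, hence semiperfect.

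The second step is to descend this to $k$ using Rost nilpotence. By hypothesis the kernel of $End(\mathcal{M}(X))\to End(\mathcal{M}(X_K))$ is a nil ideal. The kernel $I$ of the induced map $End(M)\to End(M_K)$ is an ideal of $End(M)$ (being the kernel of a ring homomorphism) whose elements all lie in this larger nil ideal and are therefore nilpotent, so $I$ is itself a nil ideal of $End(M)$. The quotient $End(M)/I$ embeds as a subring of the finite ring $End(M_K)$, so $End(M)/I$ is finite, hence semiperfect.

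Finally I would invoke the standard ring-theoretic lifting lemma: if $I\subseteq R$ is a nil ideal and $R/I$ is semiperfect, then so is $R$. Both defining conditions of semiperfectness transfer: the inclusion $I\subseteq J(R)$ yields the identification $R/J(R)\cong(R/I)/J(R/I)$, which is semisimple Artinian; and an idempotent in $R/J(R)$ lifts first to $R/I$ by semiperfectness of $R/I$ and then to $R$ since idempotents always lift modulo nil ideals. Applied with $R=End(M)$ this produces the required semiperfectness and hence the Krull--Schmidt principle for $M$. The main obstacle is conceptual rather than technical, namely recasting the Rost nilpotence hypothesis---stated for all of $\mathcal{M}(X)$---as a statement about the corner ring of an arbitrary summand; once the argument is phrased in terms of $p\cdot(-)\cdot p$, the remaining work is elementary ring theory.
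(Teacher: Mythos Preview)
The paper does not give its own proof of this statement: it is quoted verbatim as Corollary~2.6 of Karpenko's paper \cite{upperkarpenko} and used as a black box. So there is no in-paper argument to compare against.

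That said, your argument is correct and is essentially the standard one behind Karpenko's result. Two small remarks. First, $End(\mathcal{M}(X_K))$ is not literally ``a finite direct sum of copies of $\Lambda$'' as a ring---grouping the Tate summands by twist, it is a finite product of matrix algebras $\prod_i M_{m_i}(\Lambda)$---but this does not affect your conclusion: since the paper assumes $\Lambda$ is a \emph{finite} ring, the endomorphism ring over $K$ is a finite set, hence Artinian, hence semiperfect, exactly as you say. Second, your passage from Rost nilpotence for $\mathcal{M}(X)$ to nilpotence of the kernel for the corner ring $p\,End(\mathcal{M}(X))\,p$ is fine: any element of that kernel is in particular an element of the big kernel, hence nilpotent in $End(\mathcal{M}(X))$, and therefore nilpotent in the corner ring. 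The rest---$I\subseteq J(R)$ for nil $I$, the identification $R/J(R)\cong (R/I)/J(R/I)$, and lifting idempotents first modulo $J(R/I)$ and then modulo the nil ideal $I$---is standard and correctly stated.
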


A very useful technique to decompose a  motive is due to Rost (\cite{rost_pfister}) and  Karpenko (\cite{cellular_karpenko}). We state  this below for convenience of the reader.

\begin{thm}(\cite{chernosov_iso}, \cite{delBano}, \cite{cellular_karpenko})
\label{thm:cellular}
Let X be a smooth, projective variety over a field $k$ with a filtration
\begin{align*}
X = X_n \supseteq  X_{n-1} \supseteq  \cdots \supseteq X_0\supseteq  X_{-1} =\emptyset
\end{align*}
where the $X_i$ are closed subvarieties. Assume that, for each integer $i \in [0,n]$, there is a smooth projective variety $Z_i$ and an affine fibration $\phi_i: X_i - X_{i-1} \rightarrow Z_i$ of relative dimension $a_i$.  Then, in the category of correspondences,
\begin{align*}
\mathcal{M}(X) = \coprod_{i=0}^n \mathcal{M}(Z_i)\{a_i\}
\end{align*}
\end{thm}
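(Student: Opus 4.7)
The plan is to induct on the length $n$ of the filtration. For the base case $n=0$, the hypotheses force $X \simeq Z_0$ with $a_0 = 0$: an affine fibration with fiber $\mathbb{A}^{a_0}$ on a projective variety can only have zero-dimensional fibers, giving the trivial decomposition. For the inductive step, assuming the result for filtrations of length $< n$, the strategy is to split off the top summand $\mathcal{M}(Z_n)(a_n)$ by constructing a projector $p_n \in \mathrm{End}(\mathcal{M}(X))$ whose image is $\mathcal{M}(Z_n)(a_n)$ and whose complement factors through $\mathcal{M}(X_{n-1}) \to \mathcal{M}(X)$, then apply induction to the truncated filtration $X_{n-1} \supseteq \cdots \supseteq X_0 \supseteq \emptyset$.

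To construct that summand, let $U := X_n \setminus X_{n-1}$ and consider the affine fibration $\phi_n : U \to Z_n$. By homotopy invariance of Chow groups for affine bundles, $\phi_n^* : \mathrm{CH}^*(Z_n) \to \mathrm{CH}^{*+a_n}(U)$ is an isomorphism. Let $\Gamma_{\phi_n} \subseteq U \times Z_n$ be the graph of $\phi_n$ and let $\overline{\Gamma} \in \mathrm{CH}^{\dim Z_n + a_n}(X \times Z_n)$ be the class of its Zariski closure in $X \times Z_n$. This defines a correspondence $\alpha : \mathcal{M}(X) \to \mathcal{M}(Z_n)(a_n)$. Using the projection formula one produces a companion cycle $\beta : \mathcal{M}(Z_n)(a_n) \to \mathcal{M}(X)$ whose restriction to $U \times Z_n$ represents the transpose of the graph of $\phi_n$. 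A direct computation, performed on the open subset $U \times Z_n$ and lifted via the localization sequence, shows that $\alpha \circ \beta = \mathrm{id}_{\mathcal{M}(Z_n)(a_n)}$.

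Set $p_n := \beta \circ \alpha$. Then $p_n$ is an idempotent whose image is isomorphic to $\mathcal{M}(Z_n)(a_n)$. For the complementary projector $q_n := \mathrm{id} - p_n$, the localization exact sequence
\[
\mathrm{CH}^*(X_{n-1} \times X) \longrightarrow \mathrm{CH}^*(X \times X) \longrightarrow \mathrm{CH}^*(U \times X) \longrightarrow 0
\]
together with homotopy invariance shows that $q_n$ vanishes on $U$, hence $q_n$ factors as $\iota_* \circ q'_n \circ \iota^*$ (up to a correction on Chow-theoretic grounds) where $\iota : X_{n-1} \hookrightarrow X$ is the closed immersion. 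In particular, the summand $(\mathcal{M}(X), q_n)$ is isomorphic to $\mathcal{M}(X_{n-1})$, and applying the inductive hypothesis to $X_{n-1}$ assembles the claimed decomposition $\mathcal{M}(X) \simeq \coprod_{i=0}^n \mathcal{M}(Z_i)(a_i)$.

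The main obstacle is the bookkeeping in the middle step: one must show that $\overline{\Gamma}$ is well-defined modulo cycles supported on $X_{n-1} \times Z_n$, verify that $\alpha \circ \beta$ and $\beta \circ \alpha$ have the correct form after lifting from $U$ to $X$, and confirm that $q_n$ truly factors through $\mathcal{M}(X_{n-1})$ rather than merely being supported there in a weaker sense. All of this is controlled by the localization long exact sequence for $U \subseteq X \supseteq X_{n-1}$ combined with homotopy invariance for affine bundles, and once these tools are deployed the remaining inductive assembly is formal.
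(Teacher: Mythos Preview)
The paper does not give its own proof of this theorem; it is quoted from the literature (Chernousov--Gille--Merkurjev, del Ba\~no, Karpenko) and used as a black box. So there is no proof in the paper to compare against, and your proposal should be read against the standard arguments in those references.

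Your overall toolkit is correct --- closures of graph correspondences, homotopy invariance for affine bundles, the localization sequence --- but the inductive scheme has a genuine gap. The hypotheses only require $X=X_n$ to be smooth; the intermediate closed subvarieties $X_{n-1},\dots,X_0$ are merely closed, and in the key applications (Schubert filtrations, for instance) they are typically singular. Hence $\mathcal{M}(X_{n-1})$ is not an object of the category of Chow motives you are working in, so the sentence ``the summand $(\mathcal{M}(X),q_n)$ is isomorphic to $\mathcal{M}(X_{n-1})$'' has no meaning, and you cannot invoke the inductive hypothesis on $X_{n-1}$.

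The proofs in the cited references avoid this by not inducting on the variety carrying the filtration. Instead one constructs, for every $i$ simultaneously, correspondences $\alpha_i:\mathcal{M}(Z_i)(a_i)\to\mathcal{M}(X)$ and $\beta_i:\mathcal{M}(X)\to\mathcal{M}(Z_i)(a_i)$ via closures of the graphs of the $\phi_i$ inside $Z_i\times X$, and then checks the orthogonality relations $\beta_j\circ\alpha_i=\delta_{ij}\,\mathrm{id}$ and the completeness relation $\sum_i\alpha_i\circ\beta_i=\Delta_X$ directly. The filtration is used only to control supports in these computations (cycles supported on $X_{i-1}\times Z_i$ die under the relevant pushforwards), never to form the motive of an intermediate $X_i$. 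If you rewrite your argument in that spirit --- prove the identities for all $i$ at once rather than peeling off one stratum and recursing --- the obstacle disappears and the same localization/homotopy-invariance bookkeeping you already outlined goes through.
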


A situation where the above theorem can be applied is when $X$ is a smooth projective variety with a $\mathbb{G}_m$-action. The following result is due to Iversen (\cite{iverson}), Biya{\l}nicki-Birula (\cite{bb1}, \cite{bb2}) and Hesselink (\cite{hesselink}). See Theorem 3.3 and Theorem 3.4 in \cite{bro} for more details.

\begin{thm}(\cite{bb1}, \cite{bb2}, \cite{hesselink}, \cite{iverson})
\label{thm:multaction}
Let $X$ be a smooth projective scheme over $k$ equipped with an action of  $\mathbb{G}_m$. Then,
\begin{align*}
\mathcal{M}(X) = \coprod_{i} \mathcal{M}(Z_i)\{a_i\}
\end{align*}
where $Z_i$ are connected components of $X^{\mathbb{G}_m}$ and $a_i$ are dimensions of the positive eigenspace of the action of $\mathbb{G}_m$ on the tangent space of $X$ at an arbitrary point in $Z_i$.
\end{thm}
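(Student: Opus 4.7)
The plan is to realize the desired decomposition as a special case of the cellular decomposition Theorem \ref{thm:cellular}. The classical Biyałnicki-Birula / Hesselink / Iversen theory produces exactly the affine fibration data that Theorem \ref{thm:cellular} needs, so the whole argument is about assembling that data in a way which descends to the ground field $k$.

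First I would invoke the Biyałnicki-Birula theorem (for smooth projective $X$ with $\mathbb{G}_m$-action): the fixed locus $X^{\mathbb{G}_m}$ is smooth, its connected components $Z_i$ are smooth projective, and for each $i$ the attracting set
\[
X_i^+ = \Bigl\{ x \in X : \lim_{t\to 0} t\cdot x \text{ exists and lies in } Z_i \Bigr\}
\]
is a locally closed subscheme of $X$. Since $X$ is projective, every $k$-point has such a limit, so the $X_i^+$ partition $X$ set-theoretically. The essential geometric input of Hesselink/Iversen (valid in arbitrary characteristic) is that the morphism $\phi_i : X_i^+ \to Z_i$, $x \mapsto \lim_{t\to 0} t\cdot x$, is an affine fibration of relative dimension equal to $a_i$, where $a_i$ is the dimension of the positive-weight subspace of the $\mathbb{G}_m$-representation on $T_z X$ for any $z \in Z_i$ (constant on $Z_i$ since the weights are locally constant on the fixed locus). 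In characteristic zero this fibration is even a vector bundle; in general one only gets an affine bundle, but affine is exactly what Theorem \ref{thm:cellular} allows.

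Next I would order the components so as to produce a filtration by closed subvarieties. Choose any $\mathbb{G}_m$-equivariant ample line bundle on $X$ (exists since $X$ is projective and any ample bundle can be averaged over $\mathbb{G}_m$), and use the associated moment-like weight function to order the $Z_i$: put $Z_i \preceq Z_j$ if the weight on $Z_i$ does not exceed that on $Z_j$. With such an ordering,
\[
X_n \supseteq X_{n-1} \supseteq \cdots \supseteq X_0 \supseteq X_{-1} = \emptyset, \qquad X_i := \bigsqcup_{j \leq i} X_j^+,
\]
is a filtration by closed subvarieties of $X$, and each $X_i \setminus X_{i-1} = X_i^+$ is an affine fibration over $Z_i$ of relative dimension $a_i$. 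Plugging this into Theorem \ref{thm:cellular} gives $\mathcal{M}(X) = \coprod_i \mathcal{M}(Z_i)(a_i)$ as desired.

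The main point requiring care is descent to $k$: the Biyałnicki-Birula decomposition is defined using $\mathbb{G}_m$-equivariant data (fixed loci, limits, equivariant ample line bundles), all of which are defined over $k$ as soon as the $\mathbb{G}_m$-action is. Thus the $Z_i$, the attracting sets $X_i^+$, the maps $\phi_i$, and the ordering are all automatically $k$-rational, so the filtration lives over $k$ and Theorem \ref{thm:cellular} applies over $k$. The subtlest step, and the one I would expect to spend most time verifying carefully, is the affine-fibration statement in positive characteristic — this is exactly Hesselink's extension of Biyałnicki-Birula to arbitrary characteristic, where the fibers need not be linearized but remain affine spaces up to universal homeomorphism, which is enough to apply the cellular decomposition machinery.
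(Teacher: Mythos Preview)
The paper does not give its own proof of this statement; it is recorded as a known result with citations to Biya{\l}nicki-Birula, Hesselink and Iversen, and the reader is pointed to Theorems~3.3 and~3.4 of \cite{bro} for details. Your proposal is exactly the standard argument those references contain: Biya{\l}nicki-Birula/Hesselink supply the attracting-set stratification with affine fibrations $X_i^+\to Z_i$ of relative dimension $a_i$, an equivariant-ample ordering turns the strata into a filtration by closed subvarieties, and then Theorem~\ref{thm:cellular} yields the motivic decomposition. So your approach agrees with what the paper is implicitly invoking.

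One small correction to your last paragraph: Hesselink's result in positive characteristic gives genuine Zariski-locally trivial $\mathbb{A}^{a_i}$-fibrations $X_i^+\to Z_i$, not merely ``affine up to universal homeomorphism.'' This matters because Theorem~\ref{thm:cellular} as stated requires honest affine fibrations; a weaker statement would not suffice without further argument. Apart from that, your sketch is correct and matches the literature the paper cites.
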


Observe that any projective homogeneous variety over $k$ is geometrically cellular i.e., has cellular decomposition (see Definition 3.2 in \cite{bruno}) over the algebraic closure $K$ and therefore by Theorem  \ref{thm:cellular} is geometrically split i.e., its motive splits into direct sum of Tate motives over  $K$.  An important consequence of this  fact, Theorem \ref{thm:rost} and  Theorem \ref{thm:krull} is the following corollary. This is also proved by Chernousov and Merkurjev (Corollary 35 in \cite{chernousov_krull}).

\begin{cor}
\label{cor:krull_homo}
The Krull-Schmidt principle holds for any shift of any summand of the motive of  projective homogeneous varieties in $Chow(k, \Lambda)$. 
\end{cor}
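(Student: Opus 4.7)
The plan is to assemble the three ingredients already stated in \S\ref{sec:prelim}, since the corollary is essentially a formal consequence of Theorem \ref{thm:rost}, Theorem \ref{thm:krull}, and Theorem \ref{thm:cellular} together with the classical cellular structure of $G/P$.

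First, I would verify that any projective $G$-homogeneous variety $X$ over $k$ is \emph{geometrically split}. Over the algebraic closure $K$ we have $X_K \simeq G_K/P$ for some (smooth) parabolic $P$, and the Bruhat decomposition expresses $G_K/P$ as a disjoint union of Schubert cells, giving a filtration by closed subvarieties whose successive differences are affine spaces. Applying Theorem \ref{thm:cellular} over $K$, with each $Z_i = \mathrm{Spec}\,K$ and the $\phi_i$ being the Schubert cell projections, yields a decomposition of $\mathcal{M}(X_K)$ as a finite direct sum of Tate motives. Hence $X$ is geometrically split in the sense needed.

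Second, Theorem \ref{thm:rost} directly gives Rost nilpotence for $X$: the kernel of $End(\mathcal{M}(X)) \to End(\mathcal{M}(X_K))$ consists of nilpotents. With both properties in hand, Theorem \ref{thm:krull} applies verbatim to any shift of any summand of $\mathcal{M}(X)$, producing a unique (up to permutation) decomposition into indecomposable summands, which is exactly the statement of the corollary.

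There is no real obstacle here; the corollary is packaged so that the proof is a one-line citation. The only thing to be careful about is the invocation of geometric cellularity: strictly speaking one must note that the cells of the Bruhat decomposition are defined over $K$ (not necessarily over $k$), which is why one applies Theorem \ref{thm:cellular} after base change rather than directly to $X$. This is why the conclusion is phrased in terms of a geometric splitting plus Rost nilpotence, rather than direct cellularity of $X$.
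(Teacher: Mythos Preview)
Your proposal is correct and follows exactly the approach the paper takes: the corollary is stated immediately after the remark that projective homogeneous varieties are geometrically cellular (hence geometrically split via Theorem~\ref{thm:cellular}), and is explicitly presented as a consequence of this fact together with Theorem~\ref{thm:rost} and Theorem~\ref{thm:krull}. The paper does not supply any further argument beyond this citation, so your write-up simply makes explicit what the paper leaves implicit.
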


The upper indecomposable motives of projective homogeneous varieties  are the basic building blocks as  proved by Karpenko in \cite{upperkarpenko}.

\begin{thm}(Theorem 3.5 in \cite{upperkarpenko})
\label{thm:karpenko}
 Let $X$ be a projective $G$-homogeneous variety. Then any indecomposable summand of $\mathcal{M}(X)$ is isomorphic to  $U_{\tau}\{i\} $ for some $i$ and some $\tau \subseteq \Delta_G$ satisfying $\tau_{k(X)} \subseteq \tau$.
\end{thm}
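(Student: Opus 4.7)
The plan is to combine the Krull--Schmidt principle for motives of projective homogeneous varieties (which follows from Theorems \ref{thm:rost} and \ref{thm:krull}) with a systematic analysis of upper motives. First I would write $\mathcal{M}(X) = \bigoplus_{j} M_j$ with each $M_j$ indecomposable. Since $X_K$ is cellular, Theorem \ref{thm:cellular} gives $\mathcal{M}(X_K) = \bigoplus_\ell \Lambda(n_\ell)$, so each $(M_j)_K$ decomposes into Tate motives. Picking the smallest twist $a_j$ appearing in $(M_j)_K$ and setting $N_j = M_j(-a_j)$, the base change $(N_j)_K$ has $\Lambda$ as a direct summand, so $Ch^0(N_j) \neq 0$. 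Thus $M_j \simeq N_j(a_j)$ with $N_j$ upper indecomposable, and the task reduces to showing that every upper indecomposable summand of $\mathcal{M}(X)$ has the form $U_\tau$ for some $\tau \subseteq \Delta_G$ with $\tau_{k(X)} \subseteq \tau$.

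Next I would use the defining property of the upper motive. Given an upper indecomposable summand $N$ of $\mathcal{M}(X)$, the goal is to exhibit a projective homogeneous variety $X_\tau$ for $G$ whose unique upper indecomposable summand is isomorphic to $N$. The strategy is to range over subsets $\tau \subseteq \Delta_G$ and, for each, to study correspondences $\alpha : \mathcal{M}(X_\tau) \to \mathcal{M}(X)$ and $\beta : \mathcal{M}(X) \to \mathcal{M}(X_\tau)$ whose composite produces the projector onto $N$ inside $\mathcal{M}(X)$ while cutting out an upper summand inside $\mathcal{M}(X_\tau)$. Choosing $\tau$ minimal with this property, the geometric content is that $\tau$ is the parabolic type whose flag variety first detects the trivial Tate piece of $N_K$; Krull--Schmidt applied to $\mathcal{M}(X_\tau)$ then forces $N \simeq U_\tau$. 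Candidate idempotents are constructed over $K$ from the cellular decomposition and lifted to $k$ by Rost nilpotence (Theorem \ref{thm:rost}).

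Finally, for the inclusion $\tau_{k(X)} \subseteq \tau$: since $X$ acquires a rational point over $k(X)$, the motive $\mathcal{M}(X_{k(X)})$ contains a trivial Tate summand, and the projector defining $N$ yields, after base change to $k(X)$, an upper summand $N_{k(X)}$ admitting $\Lambda$ as a direct summand. If $\tau$ did not contain $\tau_{k(X)}$, then $X_\tau$ would lack a parabolic of the required type defined over $k(X)$, so $(X_\tau)_{k(X)}$ would not acquire a rational point and $(U_\tau)_{k(X)}$ could not contain $\Lambda$, contradicting $N \simeq U_\tau$. The main obstacle will be the correspondence construction of the previous paragraph: one needs precise control over $CH(X \times X_\tau)$ in order to identify usable idempotents over $K$, and a careful application of Rost nilpotence to lift these idempotents and their composition relations to $k$. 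Once these are in place, the rest is formal bookkeeping with Krull--Schmidt and the definition of upper motive.
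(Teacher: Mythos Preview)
This theorem is not proved in the present paper at all: it is quoted in the Preliminaries section as Theorem~3.5 of Karpenko's paper \cite{upperkarpenko}, with no argument given here. There is therefore no ``paper's own proof'' to compare your proposal against.

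As a sketch of Karpenko's original argument, your first and third paragraphs are on target: shifting an indecomposable summand so that it becomes upper is exactly the opening move, and the reason for the constraint $\tau_{k(X)}\subseteq\tau$ is essentially what you describe (if $U_\tau$ is a summand of $\mathcal{M}(X)$ then $X_\tau$ must acquire a rational point over $k(X)$, which forces $\tau_{k(X)}\subseteq\tau$). The real content, however, sits in your middle paragraph, and there the proposal is too vague to count as a proof. ``Range over subsets $\tau$ and choose $\tau$ minimal with this property'' does not tell you which $\tau$ to take or how to produce the needed correspondences. Karpenko's actual mechanism is concrete: write the projector $\pi$ defining the upper summand $N$ as a $\Lambda$-linear combination of classes of prime cycles in $X\times X$; since $\mathrm{mult}(\pi)$ is a unit, some prime component $V$ has $\mathrm{mult}[V]$ a unit, hence $V$ dominates the first factor. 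One then passes to the function field $k(V)$ (or the generic fibre over the first projection), where $X$ acquires a rational point, and reads off the subset $\tau$ from the parabolic type that becomes defined there; the class $[V]$ itself supplies the multiplicity-one correspondences linking $N$ to $U_\tau$. Your outline omits this identification step, and the phrase ``candidate idempotents are constructed over $K$ from the cellular decomposition and lifted to $k$ by Rost nilpotence'' is misleading: the projector for $N$ is already given over $k$, and Rost nilpotence is used only at the very end to conclude $N\simeq U_\tau$ from the existence of multiplicity-one correspondences in both directions.
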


\section{Motivic decomposition for isotropic $G$}
\label{sec:iso}

 Recall from \cite{iverson} that for a smooth projective variety $X$ equipped with an action of $\mathbb{G}_m$, the fixed point locus $X^{\mathbb{G}_m}$ is a smooth closed subscheme of $ X$.

\begin{prop} \label{prop:surj}
Let  $X$ and $Y$ be smooth projective varieties equipped with an action of $\mathbb{G}_m$. Let $\theta: X \rightarrow Y$ be a  finite surjective $\mathbb{G}_m$-equivariant morphism.  Then the restriction morphism $\theta|_{X^{\mathbb{G}_m}} : X^{\mathbb{G}_m} \rightarrow Y^{\mathbb{G}_m}$ is surjective. 
\end{prop}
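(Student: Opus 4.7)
My plan is to verify surjectivity on $K$-points, which suffices because both source and target are of finite type. Pick a geometric point $y \in Y^{\mathbb{G}_m}(K)$. Since $\theta$ is surjective the scheme-theoretic fiber $\theta^{-1}(y)$ is non-empty, and since $\theta$ is finite this fiber is zero-dimensional, supported on finitely many $K$-points $x_1,\dots,x_r$. The goal is to show that each $x_i$ in fact lies in $X^{\mathbb{G}_m}(K)$.

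To get that, I would exploit $\mathbb{G}_m$-equivariance together with the fact that $y$ is fixed: the action of $\mathbb{G}_m$ on $X$ must then preserve the set-theoretic fiber $\{x_1,\dots,x_r\}$, inducing a morphism of $k$-group schemes from $\mathbb{G}_m$ to the permutation group of this finite set. Since $\mathbb{G}_m$ is connected and the target is a finite constant group scheme, this morphism is trivial, so each $x_i$ is individually fixed by every $t \in \mathbb{G}_m(K)$. Equivalently, the orbit morphism $\mathbb{G}_m \to X$, $t \mapsto t\cdot x_i$, is constant, hence $x_i \in X^{\mathbb{G}_m}(K)$ with $\theta(x_i) = y$. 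This produces the required preimage and so the restriction is surjective on geometric points, hence surjective as a morphism.

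The only subtle bookkeeping is the identification
\[
X^{\mathbb{G}_m}(K) \;=\; \{\,x \in X(K) : t\cdot x = x \text{ for all } t \in \mathbb{G}_m(K)\,\},
\]
which I would justify by noting that $\mathbb{G}_m(K)$ is Zariski dense in $\mathbb{G}_m$, so a $K$-point whose orbit map is pointwise trivial on $\mathbb{G}_m(K)$ has constant orbit map, and therefore factors through the fixed locus. I do not foresee any real obstacle: the proposition is essentially a formal consequence of (i) finiteness of $\theta$, which makes the fibers finite sets, and (ii) connectedness of $\mathbb{G}_m$, which forces its action on any such finite set to be trivial. The smoothness of $X^{\mathbb{G}_m}$ and $Y^{\mathbb{G}_m}$ (Iversen) is not needed for this argument but can be invoked if one prefers a cleaner scheme-theoretic statement of the fixed-point locus.
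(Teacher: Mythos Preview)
Your proposal is correct and follows essentially the same approach as the paper: pick a fixed point $y$, note that the fiber over it is finite (since $\theta$ is finite), observe that $\mathbb{G}_m$ acts on this fiber by equivariance, and use connectedness of $\mathbb{G}_m$ to conclude that each point of the fiber is fixed. The paper's version is terser---it works with an arbitrary closed point and simply remarks that $\mathbb{G}_m$ fixes the underlying reduced subscheme of each point in the finite fiber---but the idea is identical.
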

\begin{proof}
Pick a point  $y \in Y^{\mathbb{G}_m}$.  Clearly $\mathbb{G}_m$ acts on the fiber $X_y = X \times_Y Spec~k(y) $. Since  $\theta$ is finite,  $X_y$ is finite. Therefore $\mathbb{G}_m$ fixes the underlying reduced subschemes of each point in $X_y$. 
\end{proof}

\noindent A morphism $X\rightarrow Y$ of finite type is surjective if and only if the induced map $X(\Omega) \rightarrow Y(\Omega)$ is surjective for every algebraically closed field $\Omega$ (EGA IV, Chapter 1, \S6, Proposition 6.3.10).  Using this we get an easy corollary of the above proposition.

\begin{cor}
\label{cor:bijective}
With notations as in Proposition \ref{prop:surj}, let $\{X_i\}_{i=1}^n$ and   $\{Y_i\}_{i=1}^m$  denote the  connected components of $X^{\mathbb{G}_m}$ and $Y^{\mathbb{G}_m}$ respectively.  Suppose $\theta:  X(\Omega) \rightarrow Y(\Omega)$  is bijective for every algebraically closed field $\Omega$. Then $n=m$ and after permuting indices,   $\theta|_{X_i}: X_i(\Omega) \rightarrow Y_i(\Omega)$ is  also bijective.
\end{cor}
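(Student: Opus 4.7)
The plan is to combine the surjectivity of the restriction $\theta^0 := \theta|_{X^{\mathbb{G}_m}} \colon X^{\mathbb{G}_m} \to Y^{\mathbb{G}_m}$ provided by Proposition~\ref{prop:surj} with the hypothesis that $\theta$ is bijective on $\Omega$-points in order to match up the components of source and target.

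First, since $\theta$ is $\mathbb{G}_m$-equivariant it sends fixed points to fixed points, and since each $X_i$ is connected, its image $\theta(X_i)$ lies in a unique connected component $Y_{\sigma(i)}$ of $Y^{\mathbb{G}_m}$. This defines a function $\sigma \colon \{1,\dots,n\} \to \{1,\dots,m\}$, and Proposition~\ref{prop:surj} immediately yields both that $\sigma$ is surjective and that $\bigcup_{i \in \sigma^{-1}(j)} \theta(X_i) = Y_j$ as topological subsets, for every $j$.

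Next I would show that $\sigma$ is injective. Since $\theta$ is finite it is a closed morphism, so each $\theta(X_i)$ is a non-empty closed subset of $Y_{\sigma(i)}$. Injectivity of $\theta$ on $\Omega$-points for every algebraically closed $\Omega$ implies that for distinct indices $i \neq i'$ the closed subsets $\theta(X_i)$ and $\theta(X_{i'})$ have disjoint $\Omega$-points, and hence are disjoint as topological subsets of $Y$. If $\sigma^{-1}(j)$ contained two distinct indices $i, i'$, then $Y_j$ would decompose as the disjoint union of the non-empty closed subsets $\theta(X_i)$ and $\bigcup_{i'' \in \sigma^{-1}(j)\setminus\{i\}} \theta(X_{i''})$, contradicting the connectedness of $Y_j$. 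Therefore $\sigma$ is a bijection and $n = m$.

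After relabeling so that $\sigma = \mathrm{id}$, the restricted map $\theta|_{X_i}(\Omega) \colon X_i(\Omega) \to Y_i(\Omega)$ is injective, since $\theta(\Omega)$ is, and surjective, since only $X_i$ maps into $Y_i$ while $\theta^0(\Omega)$ is already surjective onto $Y_i(\Omega)$; hence it is bijective as required. The step requiring the most care is the deduction that injectivity of $\theta$ on $\Omega$-points forces the closed images $\theta(X_i)$ to be pairwise topologically disjoint, which rests on the fact that a reduced closed subscheme of a $k$-scheme of finite type is determined by its $\Omega$-points for any sufficiently large algebraically closed $\Omega$.
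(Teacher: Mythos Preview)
Your proof is correct and follows essentially the same approach as the paper. The paper states only that the result is an easy corollary of Proposition~\ref{prop:surj} together with the fact (from EGA~IV) that a morphism of finite type is surjective if and only if it is surjective on $\Omega$-points for every algebraically closed $\Omega$; you have simply written out the details of that ``easy corollary,'' namely the passage from surjectivity of $\theta^0$ to surjectivity on $\Omega$-points, the connectedness argument matching components via $\sigma$, and the final bijectivity on each piece. One small remark: the surjectivity of $\theta^0(\Omega)$ that you invoke in your last step is exactly where the EGA characterization enters, so it would be worth citing it explicitly rather than leaving it implicit.
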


In this section we assume that $G$ is an isotropic,  semi-simple group of inner type over $k$.  We  fix an  embedding $\lambda: \mathbb{G}_m \rightarrow G$ of a $k$-split torus. Let $H$ denote the centralizer of $\lambda$ in $G$. Then by  Theorem 6.4.7  in \cite{springer},   $H$ is connected and reductive. It is defined over $k$ by Proposition 13.3.1 of \cite{springer}. Recall that  if $X_K \simeq G/P$ and  $\widetilde{X}_K \simeq G/\widetilde{P}$, we have a canonical $G$-equivariant morphism  $\theta: X \rightarrow \widetilde{X}$.

\begin{thm}
\label{thm:main}
Let $\widetilde{X}$  be a projective pseudo-homogeneous variety for $G$ and let $X$ be the corresponding projective  homogeneous variety. Then each connected component of the  fixed point locus $\widetilde{X}^\lambda$ is a projective  pseudo-homogeneous for $H$. Moreover  if $\widetilde{X}^\lambda = \coprod \widetilde{Z_i}$, then $X^{\lambda} = \coprod Z_i$ where $Z_i$ are the projective $H$-homogeneous varieties corresponding to $\widetilde{Z_i}$
\end{thm}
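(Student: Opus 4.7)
The plan is to leverage the canonical $G$-equivariant morphism $\theta: X \to \widetilde{X}$, which over $K$ is the quotient map $G_K/P \to G_K/\widetilde{P}$. Because $\widetilde{P}/P$ is infinitesimal, $\theta$ is finite and purely inseparable, hence in particular a finite surjective morphism that is bijective on $\Omega$-points for every algebraically closed $\Omega$. Composing with $\lambda$ makes $\theta$ a $\mathbb{G}_m$-equivariant morphism, so Proposition \ref{prop:surj} and Corollary \ref{cor:bijective} apply directly. This produces a one-to-one pairing between the connected components of $X^{\lambda}$ and those of $\widetilde{X}^{\lambda}$, and if we write $X^{\lambda} = \coprod Z_i$ and $\widetilde{X}^{\lambda} = \coprod \widetilde{Z_i}$ under this pairing, the restriction $\theta|_{Z_i} : Z_i \to \widetilde{Z_i}$ remains finite, surjective, and bijective on $\Omega$-points.

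Next I would verify $H$-homogeneity on both sides. Since $\lambda$ is central in $H$, the group $H$ preserves each fixed locus and each of its connected components. That $Z_i$ is projective $H$-homogeneous is classical: over $K$, for a $K$-point $x \in Z_i$ with $G_K$-stabilizer $P_x$, the intersection $Q_i := H_K \cap P_x$ is a parabolic subgroup of $H_K$ and $(Z_i)_K \simeq H_K/Q_i$ (this is the content underlying Brosnan's Theorem recalled in \S\ref{sec:prelim}). For $\widetilde{Z_i}$, transitivity of the $H$-action on $\Omega$-points is then automatic from $H$-equivariance of $\theta|_{Z_i}$: given $\widetilde{y_1},\widetilde{y_2}\in\widetilde{Z_i}(\Omega)$, lift to $y_1,y_2\in Z_i(\Omega)$ via the bijection from paragraph one, pick $h\in H(\Omega)$ with $h\cdot y_1 = y_2$, and push forward. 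By the criterion recalled right after the definition of pseudo-homogeneous variety, this says $(\widetilde{Z_i})_K \simeq H_K/\widetilde{Q_i}$ for some closed subgroup scheme $\widetilde{Q_i}$ of $H_K$, realized as the $H_K$-stabilizer of any chosen $K$-point $\widetilde{x} = \theta(x)$ of $\widetilde{Z_i}$.

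The main technical obstacle, and the one I would spend the most care on, is identifying $\widetilde{Q_i}$ as a \emph{parabolic} subgroup scheme of $H_K$ whose reduced part is exactly $Q_i$, which both establishes that $\widetilde{Z_i}$ is projective pseudo-homogeneous for $H$ and shows that $Z_i$ is the corresponding homogeneous variety in the sense of the definition. Writing $\widetilde{Q_i} = H_K \cap \widetilde{P}_{\widetilde{x}}$ where $\widetilde{P}_{\widetilde{x}}$ is the stabilizer of $\widetilde{x}$ in $G_K$, we have $P_x \subseteq \widetilde{P}_{\widetilde{x}}$ because $P$ is the reduced scheme of $\widetilde{P}$; intersecting with the reduced scheme $H_K$ gives $Q_i \subseteq \widetilde{Q_i}$. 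On $\Omega$-points, bijectivity of $\theta|_{Z_i}$ forces $Q_i(\Omega) = \widetilde{Q_i}(\Omega)$, so the smooth subgroup $Q_i$ is the reduced scheme of $\widetilde{Q_i}$. Since $Q_i$ is a parabolic of $H_K$, Wenzel's classification guarantees that a subgroup scheme of a reductive group whose reduced part is parabolic is itself a parabolic subgroup scheme, so $\widetilde{Q_i}$ is parabolic. Finally, to match the official notion of ``corresponding'' variety from the introduction, one checks that the twisting cocycle producing $\widetilde{Z_i}$ from $(\widetilde{Z_i})_K$ and the one producing $Z_i$ from $(Z_i)_K$ are both obtained from the cocycle defining $\widetilde{X}$ by composing with the common map $Inn(G_0) \to Inn(H_0) \to Aut^0(H_0/Q_i)$ and $Aut^0(H_0/\widetilde{Q_i})$ respectively, which is immediate from naturality of the twisting construction and the fact that $\lambda$ is defined over $k$.
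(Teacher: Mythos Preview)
Your proof is correct and follows the same route as the paper: bijectivity of $\theta$ on $\Omega$-points, Corollary~\ref{cor:bijective} to pair up components, Brosnan's result that each $Z_i$ is $H$-homogeneous, and then lifting $H$-transitivity on $\Omega$-points through $\theta$ to conclude each $\widetilde{Z_i}$ is pseudo-homogeneous; the paper's treatment of the ``corresponding'' claim is exactly your observation that $Q_i=\mathrm{Stab}_H(x)\subseteq\mathrm{Stab}_H(\theta(x))=\widetilde{Q_i}$ with equal $K$-points, hence $Q_i=(\widetilde{Q_i})_{\mathrm{red}}$. One caution: your final cocycle-matching paragraph is both unnecessary (you already established $(\widetilde{Q_i})_{\mathrm{red}}=Q_i$, which is precisely what ``corresponding'' means) and slightly off as written, since there is no natural homomorphism $Inn(G_0)\to Inn(H_0)$ in general---drop that paragraph and your argument is complete.
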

\begin{proof}

First note that  $H$ acts on $\widetilde{X}^\lambda$ because $\lambda(t)\cdot h \cdot  x = h \cdot \lambda(t) \cdot x = h \cdot x$ $\forall h \in H, t \in \mathbb{G}_m , x \in \widetilde{X}^{\lambda}$. Let $Y$ be a connected component of $\widetilde{X}^{\lambda}$.  It suffices to show that  the action map $H \times Y \rightarrow Y \times Y$ is surjective on $\Omega$-points for every algebraically closed field $\Omega$ over $K$. By III, \S1, 1.15 of \cite{DG}, the $G$-equivariant morphism $\theta(\Omega): X(\Omega) \xrightarrow{} \widetilde{X}(\Omega)$ is bijective. Therefore,  by Corollary \ref{cor:bijective}, ${X}^{\lambda}(\Omega) \rightarrow\widetilde{X}^{\lambda}(\Omega)$ is also  bijective.  So there exists a connected component  $Z$ of $X^{\lambda}$ such that $\theta: Z(\Omega) \rightarrow Y(\Omega)$ is a bijection. By Theorem 7.1 in  \cite{bro}, $Z$ is projective homogeneous for ${H}$. Therefore the action map $H \times Z \rightarrow Z \times Z$ is surjective on $\Omega$-points.  We have the following commutative diagram:

\begin{equation*}
\begin{tikzpicture}[thick]
  \node (A) {${H} \times Z$};
  \node (B) [right of=A] {$Z \times Z$};
  \node (C) [below of=A] {${H} \times Y$};
  \node (D) [below of=B] {$Y\times Y$};

  \draw[->] (A) to node {} (B);
  \draw[->] (A) to node [swap] {$(id, \theta)$} (C);
  \draw[->] (C) to node [swap] {} (D);
  \draw[->] (B) to node {$(\theta,\theta)$} (D);
 
\end{tikzpicture}
\end{equation*}

The morphisms given by the top  arrow and $(\theta,\theta)$ are surjective on $\Omega$-points as argued before. Hence we conclude that the bottom arrow is surjective on $\Omega$-points.  This  proves that each $Y$ is projective pseudo-homogeneous for ${H}$. \\
\indent For the second part of the claim note that if $x \in Z(K)$, then  $Stab_H(x) \subseteq Stab_H(\theta(x))$. This together with the bijectivity of $\theta: Z(K) \rightarrow Y(K)$ shows that $Z$ is the projective homogeneous variety corresponding to $Y$.
\end{proof}

We now analyze the action of $\lambda$ on the tangent space at any point in the fixed point locus $\widetilde{X}^\lambda$.    As before $X_K \simeq G/P$ and  $\widetilde{X}_K \simeq G/\widetilde{P}$. Let $b \in(G/P)^{\lambda}$.  Let $a \in G/P$ be the  unique point whose stabilizer in $G_K$ is $P$ and let $b = g\cdot a$ for some $g \in G(K)$. Then $g^{-1} \lambda g \subseteq T  \subseteq P$ for some maximal torus $T$. Let $T' = gTg^{-1}$.    Let  $\beta_1, \beta_2,\cdots, \beta_n$ be the negative roots of $G_K$ with respect to $T$ and a Borel $B$ such that $T \subseteq B \subseteq P$.  Recall from Theorem 6 in \cite{vufs} that to every parabolic  subscheme, one can associate a $W$-function defined as follows.
\begin{defn}(Definition 5 in \cite{vufs})
Write $\mathbb{N}^*$  to signify the set of non-negative natural numbers together with $\infty$. Let $\phi^+$ denote the set of positive roots of $G$. A $W$-function on $\phi^+$ is a function, $f$, on $\phi^+$  with values in $\mathbb{N}^*$ satisfying  the condition,
\begin{align*}
f(\beta) =\inf_{\alpha \in supp(\beta)} f(\alpha)
\end{align*}
 where $supp(\beta) = \{ \gamma \in \phi^+ | \beta = \gamma + \delta, \text{~for~ some~} \delta \in \phi^+\}$.
\end{defn}

\begin{remark}
\label{rmk:char}
In order to associate a $W$-function to a  parabolic subscheme as in Theorem 6 in \cite{vufs}, the authors of  the paper assume that $char~K  >3$.  This assumption  is necessary by Remark 15 in \cite{wenzel}.
\end{remark}

Let  $f$ be  the $W$-function associated to $\widetilde{P}$  and let $n_i = f(-\beta_i)$. Without loss of generality, assume that $\beta_1, \beta_2, \cdots, \beta_m$ are the negative roots such that $ f(-\beta_i) < \infty$.

\begin{lem}\label{lem:tangent}
With the notations above, there exists a $T'$-stable affine  open neighborhood  of   $\theta(b)$  in $(G/\widetilde{P})^{\lambda}$  parametrized by $T'$ - eigen functions with weights $p^{n_i} \alpha_i $ where  $\alpha_i$ are characters of $T'$. In other words, one can find an open set $V = Spec~K[X_1, X_2, \cdots, X_m]$  containing $\theta(b)$ such that
\begin{align*}
t' \cdot X_i = \alpha_i^{ p^{n_i}}(t')~ X_i~~~~~~~ \forall t' \in T'
\end{align*} 
\end{lem}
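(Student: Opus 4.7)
The plan is to reduce to the case $b=a$ (the base point of $G/P$) by $G$-equivariance, invoke the Haboush--Lauritzen local description of $G/\widetilde{P}$ near $\theta(a)$, and then transport the resulting chart along the element $g$.

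\textbf{Step 1: local chart at the base point.} By Wenzel's classification \cite{wenzel} (see also \cite{vufs}), the parabolic subgroup scheme $\widetilde{P}$ is obtained from its reduced subscheme $P$ by adjoining, for each negative root $-\beta$ with $\mathfrak{g}_{-\beta}\not\subseteq\mathrm{Lie}(P)$, the $p^{\omega(-\beta)}$-th Frobenius kernel of the root subgroup $U_{-\beta}$. Consequently, if $Y_1,\dots,Y_m$ are the root coordinates on the big cell $U^-\hookrightarrow G/P$ at $a$, with each $Y_i$ a $T$-eigenfunction of weight a character $\chi_i$ determined by $\beta_i$, then the image of this big cell under $\theta$ is an open affine neighborhood $V_a$ of $\theta(a)$ in $G/\widetilde{P}$ whose coordinate ring is the $K$-subalgebra $K[Y_1^{p^{n_1}},\dots,Y_m^{p^{n_m}}]$. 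Setting $X_i^0:=Y_i^{p^{n_i}}$, the torus $T$ stabilizes $V_a$ and acts on $X_i^0$ with weight $p^{n_i}\chi_i$.

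\textbf{Step 2: translation and computation of weights.} Since $G$ acts on $G/\widetilde{P}$ and $\theta(b)=g\cdot\theta(a)$, the set $V:=g\cdot V_a$ is an affine open neighborhood of $\theta(b)$. Put $X_i:=X_i^0\circ g^{-1}$, so $V\cong\operatorname{Spec} K[X_1,\dots,X_m]$. Because $T\subseteq P$ stabilizes $V_a$, the conjugate torus $T'=gTg^{-1}$ stabilizes $V$. For $t'\in T'$ and $p\in V$, setting $t:=g^{-1}t'g\in T$,
\begin{align*}
(t'\cdot X_i)(p) = X_i^0(g^{-1}t'^{-1}p) = (t^{-1}\cdot X_i^0)(g^{-1}p) = \chi_i(t)^{p^{n_i}}\,X_i(p).
\end{align*}
Defining the character $\alpha_i$ of $T'$ by $\alpha_i(t'):=\chi_i(g^{-1}t'g)$ then yields $t'\cdot X_i=\alpha_i(t')^{p^{n_i}} X_i$, which is the desired identity.

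\textbf{Main obstacle.} The substantive ingredient is Step 1: the assertion that the $\theta$-image of the $U^-$-big cell has coordinate ring generated precisely by the $p^{n_i}$-th Frobenius powers of the $Y_i$. This is an intrinsically non-reduced statement and requires Wenzel's explicit description of $\widetilde{P}/P$ together with the quotient construction of Chapter III of \cite{DG}; informally, the non-reducedness of $\widetilde{P}$ in the $U_{-\beta_i}$-direction ``collapses'' the coordinate $Y_i$ to its $p^{n_i}$-th Frobenius power. Once this local description at $\theta(a)$ is in hand, Step 2 is a formal transport of structure under the $G$-action and the proof is complete.
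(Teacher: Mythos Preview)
Your proposal is correct and follows essentially the same route as the paper: the paper also obtains the chart at $\theta(a)$ from the Haboush--Lauritzen big-cell description (cited there as Theorem~1 of \cite{vufs}, which is exactly your Step~1), and then translates by $g$ to produce the $T'$-stable chart at $\theta(b)$, computing the weights via $\alpha_i(t')=\beta_i(g^{-1}t'g)$ just as you do in Step~2. The only cosmetic difference is that the paper names the coordinates on the $\theta(a)$-chart directly rather than first naming coordinates on $G/P$ and then passing to their Frobenius powers.
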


\begin{proof} Let $U_P^0$ denote the opposite of the unipotent radical of $P$. By Theorem 1 in \cite{vufs}, $U = U_P^0 \cdot \theta(a)= Spec~K[Y_1, Y_2, \cdots, Y_m]$ is an affine open neighborhood of $\theta(a)$ invariant under $T$,  where 
\begin{align*}
t \cdot Y_i = \beta_i^{ p^{n_i}}(t) ~ Y_i~~~~~~~ \forall t \in T
\end{align*}
Consider the affine open neighborhood $V = gU_P^0 \cdot \theta(a)$ of $\theta(b)$. Then 
\begin{align*}
T'\cdot V= T' gU_P^0 \cdot \theta(a) = gTU_P^0 \cdot \theta(a) =gU_P^0 \cdot \theta(a) = V
\end{align*}
So $V$ is $T'$-invariant.  Moreover $V = Spec~K[X_1, X_2, \cdots, X_m]$  where $X_i = g^{-1} \cdot Y_i$.  Let $\alpha_i$ be the character of $T'$ defined by $\alpha_i(t') = \beta_i(g^{-1} t'g) ~\forall t' \in T'$. For any point $x \in V$, write $x = gy$ where $y \in U$. Then 
\begin{align*}
t' \cdot X_i(x) &= t' \cdot (g^{-1} \cdot Y_i)(gy) =  Y_i(g^{-1} t' g y )  \\
&= \beta_i^{p^{n_i}}(g^{-1}t'g) Y_i(y)  = \alpha_i^{ p^{n_i}}(t')~ X_i(x)~~~~~~~ \forall t' \in T' 
\end{align*}

\end{proof}
 
\begin{lem}
\label{lem:twists}
For any point  $b \in X^{\lambda}$, the dimension of positive eigenspaces of the  $\lambda$-action on the tangent spaces at $b$ and  $\theta(b)$ are equal.
\end{lem}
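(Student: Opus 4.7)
The plan is to identify the $\lambda$-weights on both tangent spaces via Lemma~\ref{lem:tangent} (together with the analogous classical description of $T_b X$), and to observe that the two resulting weight multisets differ only by multiplication by positive integers $p^{n_i}$, which preserves signs.

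Since $g^{-1}\lambda g\subseteq T$, one has $\lambda\subseteq gTg^{-1}=T'$, so each character $\chi$ of $T'$ restricts along $\lambda$ to an integer weight $d_\chi$ determined by $\chi(\lambda(t))=t^{d_\chi}$. Applying Lemma~\ref{lem:tangent} to the pair $(G,\widetilde{P})$ produces a $T'$-stable affine open neighborhood of $\theta(b)$ in $\widetilde{X}$ with coordinate characters $\alpha_i^{p^{n_i}}$ for $i=1,\dots,m$; writing $d_i:=d_{\alpha_i}$, the $\lambda$-weights on the cotangent space at $\theta(b)$ are $p^{n_i}d_i$, and therefore the $\lambda$-weights on $T_{\theta(b)}\widetilde{X}$ itself are $-p^{n_i}d_i$. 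For $T_b X$, the same construction applied to the reduced parabolic $P$---equivalently, the classical root-space decomposition $T_a(G/P)=\bigoplus_i \mathfrak{g}_{-\beta_i}$ transported by $g$, which is what Lemma~\ref{lem:tangent} specializes to when the $W$-function vanishes on the relevant negative roots---yields $T'$-characters $\alpha_i$ and hence $\lambda$-weights $-d_i$. The indexing and the characters $\alpha_i$ coincide across the two applications because the root subgroups $U_{-\beta_i}$ are reduced, so $U_{-\beta_i}\subseteq\widetilde{P}$ iff $U_{-\beta_i}\subseteq P$, while $\alpha_i$ is defined in the proof of Lemma~\ref{lem:tangent} purely in terms of $\beta_i$ and $g$.

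Since $p^{n_i}>0$, the integers $-p^{n_i}d_i$ and $-d_i$ have the same sign for each $i$, so the number of positive entries in the multiset $\{-p^{n_i}d_i\}_{i=1}^m$ equals the number of positive entries in $\{-d_i\}_{i=1}^m$, namely $\#\{i:d_i<0\}$. These are precisely the dimensions of the positive $\lambda$-eigenspaces on $T_{\theta(b)}\widetilde{X}$ and $T_b X$, respectively, so the lemma follows. The only mildly delicate point is the matching of indices and characters $\alpha_i$ between the two tangent-space computations, which is handled by the remark on root subgroups above; once that is in place, the positivity comparison reduces to the obvious fact that multiplication by a positive integer preserves the sign of an integer.
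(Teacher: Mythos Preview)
Your argument is correct and follows essentially the same route as the paper: both invoke Lemma~\ref{lem:tangent} to see that the cotangent weights at $b$ and $\theta(b)$ are $\alpha_i$ and $p^{n_i}\alpha_i$ respectively, and then conclude since multiplication by $p^{n_i}>0$ preserves signs. The paper is slightly terser in asserting that the same $\alpha_i$'s appear for $G/P$, whereas you spell out why the indexing and characters match via the reducedness of root subgroups; this extra justification is fine but not a different approach.
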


\begin{proof}
It suffices to prove the lemma over the algebraic closure $K$ where $X_K\simeq G/P$ and $\widetilde{X}_K \simeq G/\widetilde{P}$. So assume that $k= K$.   By Lemma  \ref{lem:tangent}, there exists an affine open cover $U =Spec~K[Y_1, Y_2, \cdots, Y_m]$ of $b$ and  an affine open cover $V= Spec~K[X_1,X_2, \cdots, X_m]$  of $\theta(b)$ parametrized by $\lambda$-eigen functions with weights $\{\alpha_i\}$ and $\{p^{n_i}\alpha_i\}$ respectively.  Let $\overline{Y_i} \in \mathfrak{m}_b/ \mathfrak{m}^2_b$ and $ \overline{X_i} \in \mathfrak{m}_{\theta(b)}/ \mathfrak{m}^2_{\theta(b)}$ denote the cosets of $Y_i$ and $X_i$ respectively. Note that $\{\overline{Y_i}\}$ and $\{\overline{X_i}\}$ form a basis for  $\mathfrak{m}_b/ \mathfrak{m}^2_b$ and  $\mathfrak{m}_{\theta(b)}/ \mathfrak{m}^2_{\theta(b)}$respectively . It is now easy to see that the span of $\overline{Y_i}$ is a positive eigenspace for $\lambda$ if and only if the span of $\overline{X_i}$ is so.  By taking the dual, we are done.

\end{proof}

By  Theorem \ref{thm:multaction}, Theorem \ref{thm:main} and Lemma  \ref{lem:twists}, we get the following motivic decomposition for $\widetilde{X}$.
\begin{cor}
\label{cor:decomp1}
Let $\widetilde{X}$ and $X$ be as in  Theorem \ref{thm:main}. Then
\begin{equation*}
\mathcal{M}(\widetilde{X}) = \coprod_{i} \mathcal {M}(\widetilde{Z_i})\{a_i\}
\end{equation*}
and 
\begin{equation*}
\mathcal{M}({X}) = \coprod_{i} \mathcal {M}({Z_i})\{a_i\}
\end{equation*}
where  $\widetilde{Z_i}$ is projective pseudo-homogeneous for  $H$ and $Z_i$  is  the corresponding projective  homogeneous variety. The twists $a_i$ are dimensions of the positive eigenspace of the action of $\lambda$ on the tangent space of $X$ at an arbitrary point $z \in Z_i$.
\end{cor}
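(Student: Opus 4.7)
The plan is to apply the Bia{\l}ynicki-Birula--type decomposition theorem (Theorem \ref{thm:multaction}) separately to the two smooth projective $k$-varieties $X$ and $\widetilde{X}$, each equipped with the $\mathbb{G}_m$-action induced by $\lambda$. This immediately yields decompositions of the form
\begin{equation*}
\mathcal{M}(X) = \coprod_i \mathcal{M}(Z_i)(a_i), \qquad \mathcal{M}(\widetilde{X}) = \coprod_j \mathcal{M}(\widetilde{Z_j})(b_j),
\end{equation*}
indexed by the connected components $Z_i$ of $X^{\lambda}$ and $\widetilde{Z_j}$ of $\widetilde{X}^{\lambda}$, where the shifts are the dimensions of the positive eigenspaces of $\lambda$ on the tangent spaces at arbitrary points of the respective components.

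The next step is to match up the two indexing sets. This is exactly the content of Theorem \ref{thm:main}: each connected component $\widetilde{Z_j}$ of $\widetilde{X}^{\lambda}$ is projective pseudo-homogeneous for the centralizer $H$, each $Z_i$ is projective $H$-homogeneous (by Brosnan's result invoked in the proof of Theorem \ref{thm:main}), and the canonical map $\theta\colon X \to \widetilde{X}$ induces a bijective correspondence between the components, identifying $Z_i$ as the projective homogeneous variety corresponding to $\widetilde{Z_i}$. Hence the two decompositions can be indexed by a common set.

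Finally, one must verify that the twists agree, i.e.\ that $a_i = b_i$ for each $i$. This is precisely Lemma \ref{lem:twists}: for any closed point $z \in Z_i$ mapping to $\theta(z) \in \widetilde{Z_i}$, the dimensions of the positive $\lambda$-eigenspaces on $T_z X$ and on $T_{\theta(z)} \widetilde{X}$ coincide, as shown there via the explicit $T'$-equivariant affine charts with weights $\alpha_i$ and $\alpha_i^{p^{n_i}}$, which have the same sign under $\lambda$. Combining these three ingredients gives the two displayed formulas with the common sequence of twists $a_i$.

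The argument is essentially bookkeeping once Theorems \ref{thm:multaction} and \ref{thm:main} and Lemma \ref{lem:twists} are available; the only point that requires genuine care is ensuring that the bijection of components from Theorem \ref{thm:main} is the \emph{same} matching under which the tangent-space comparison of Lemma \ref{lem:twists} applies. This is immediate from the fact that both identifications are realized by the single $G$-equivariant morphism $\theta$, so no further compatibility check is needed.
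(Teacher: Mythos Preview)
Your proof is correct and follows exactly the paper's approach: the corollary is obtained by combining Theorem~\ref{thm:multaction}, Theorem~\ref{thm:main}, and Lemma~\ref{lem:twists}, with the bookkeeping you describe. The paper's proof is in fact nothing more than a one-line citation of these three results, so your expanded version is simply a faithful unpacking of it.
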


Applying  the above result  inductively, we see that each of the components in the decomposition  are  projective (pseudo-) homogeneous for the centralizer $Z(S)$ of a maximal $k$-split torus $S$. By Proposition 2.2 in \cite{gpreductifs}, we have an almost direct product decomposition $Z(S) = DZ(S) \cdot Z$ where $Z$ is the center of $Z(S)$ and $DZ(S)$ is the semi-simple anisotropic kernel. Since the center of a group is contained in every parabolic subscheme, it acts trivially on any projective pseudo-homogeneous variety. Hence, each of the $\widetilde{Z_i}$ (respectively $Z_i$) are projective pseudo-homogeneous (respectively homogeneous) for the adjoint group of the semi-simple anisotropic kernel.  Therefore we conclude:

\begin{cor}
\label{cor:decomp2}
Let $\widetilde{X}$ and $X$ be as in  Theorem \ref{thm:main}. Then
\begin{equation*}
\mathcal{M}(\widetilde{X}) = \coprod_{i} \mathcal {M}(\widetilde{Z_i})\{a_i\}
\end{equation*}
and 
\begin{equation*}
\mathcal{M}({X}) = \coprod_{i} \mathcal {M}({Z_i})\{a_i\}
\end{equation*}
where each $\widetilde{Z_i}$ (respectively $Z_i$) is either $Spec~ k$ or anisotropic projective pseudo-homogeneous (respectively homogeneous) variety for the semi-simple anisotropic kernel of $G$.  
\end{cor}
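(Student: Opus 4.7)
The plan is to iterate Corollary \ref{cor:decomp1}, descending through centralizers of successively larger $k$-split tori until we reach the anisotropic kernel of $G$.

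First, I would start from the decomposition furnished by Corollary \ref{cor:decomp1}: $\mathcal{M}(\widetilde{X}) = \coprod \mathcal{M}(\widetilde{Z_i})(a_i)$ with each $\widetilde{Z_i}$ pseudo-homogeneous for $H = Z_G(\lambda)$. If $H$ is still isotropic, I would pick a $k$-split torus in $H$ not contained in $\lambda$, apply Corollary \ref{cor:decomp1} separately to each summand $\widetilde{Z_i}$ (and analogously to $Z_i$), and absorb the new twists into the old via $\mathcal{M}(Y)(a)(b) = \mathcal{M}(Y)(a+b)$. Since the dimension of the cumulative $k$-split torus being centralized strictly increases at each step while remaining bounded above by the $k$-rank of $G$, this iteration terminates after finitely many stages, leaving summands that are pseudo-homogeneous (resp.\ homogeneous) for $Z_G(S)$, the centralizer of a maximal $k$-split torus $S$ of $G$.

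Next, I would invoke Proposition 2.2 in \cite{gpreductifs} to write $Z_G(S) = D Z_G(S) \cdot Z$ as an almost direct product, where $Z$ is the central torus of $Z_G(S)$ and $D Z_G(S)$ is the semi-simple anisotropic kernel of $G$. Since the center of a group acts trivially on any projective pseudo-homogeneous variety (the action factors through the adjoint quotient), the $Z_G(S)$-action on each remaining summand factors through the adjoint group of $D Z_G(S)$. Consequently each $\widetilde{Z_i}$ (resp.\ $Z_i$) is either $\mathrm{Spec}\ k$, corresponding to the degenerate case in which the stabilizer parabolic is the whole acting group, or a genuine anisotropic projective pseudo-homogeneous (resp.\ homogeneous) variety for the semi-simple anisotropic kernel of $G$.

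The main technical obstacle will be verifying that each successive invocation of Corollary \ref{cor:decomp1} is legitimate, i.e., that the centralizer encountered at each stage is semi-simple of inner type. Semi-simplicity follows from Theorem 6.4.7 in \cite{springer} applied iteratively, and the inner type property is inherited because the centralizer of a split torus is a Levi-type subgroup containing a maximal torus of the ambient group, so the Galois action on its root datum is the restriction of the Galois action on the root datum of $G$, which is trivial on the Dynkin diagram by the inner-type assumption.
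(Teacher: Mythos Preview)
Your proposal is correct and mirrors the paper's strategy: the paragraph preceding the corollary argues exactly as you do (iterate Corollary~\ref{cor:decomp1}, land in $Z_G(S)$, use the almost direct product $Z_G(S)=DZ_G(S)\cdot Z$ and triviality of the central action). The paper's formal proof organizes the induction slightly differently---it branches on whether each $\widetilde{Z_i}$ has a $k$-point rather than on whether the ambient centralizer admits a larger $k$-split torus, and it extracts the needed co-character $\lambda'$ from the stabilizer of that $k$-point via Springer's Lemma~15.1.2, treating the ``$\lambda'$ central'' case as the source of the $\mathrm{Spec}\,k$ summands---but the underlying mechanism and the termination are the same. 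Your explicit remark about inheriting the inner-type hypothesis at each stage is a point the paper leaves implicit.
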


\begin{proof}
From Corollary \ref{cor:decomp1}, each $\widetilde{Z_i}$ is  projective pseudo-homogeneous variety for $H$.  Let $(\widetilde{Z_i})_K\simeq H/\widetilde{Q}$, for a parabolic subgroup scheme $\widetilde{Q}$ of $H_K$. If $\widetilde{Z_i}$ is anisotropic we are done. Suppose $\widetilde{Z_i}$ is isotropic, i.e., $\widetilde{Z_i}$ has a $k$-point. Then its stabilizer is defined over $k$ by Proposition 12.1.2 in \cite{springer}.  Without loss of generality we can assume that $\widetilde{Q}$ is defined over $k$. Since $k$ is perfect, the underlying reduced scheme $Q$ is also defined over $k$ and hence is isomorphic to $Q(\lambda)$ for some co-character $\lambda$ of $H$ defined over $k$ (Lemma 15.1.2 in \cite{springer}). So $H$ is isotropic. If $\lambda$ is a central torus, 
$Q(\lambda) = H$ and $\widetilde{Z_i} \simeq Spec ~k$. If $\lambda$ is non-central, then we can inductively use Corollary \ref{cor:decomp1} to get the result.
\end{proof}

\section{Rost Nilpotence}
\label{sec:rost}
In this section we prove that Rost nilpotence principle holds for projective pseudo-homogeneous varieties.\\

\noindent \emph{Proof of  Theorem \ref{thm:nilpotence}:}
The proof is similar to the one in \cite{bro}. For a field extension $L/k$, let $n_L$ denote the number of terms appearing in the decomposition of  Corollary \ref{cor:decomp2} for the the motive of the $G_L$-variety $\widetilde{X}_L$. Clearly, $ L \subset M  \Rightarrow  n_M \geq n_L$ and the maximal number of terms in the coproduct occurs precisely when each $\widetilde{Z_i}$ is $Spec~L$. In particular, this happens when $L =  K$.\\

\emph{Claim:} Set $N(d,n) = (d + 1)^{n_K-n}$ where $d$ is the dimension of $\widetilde{X}$. Then, for any morphism $f\in  End(M(\widetilde{X}))$ with $f \otimes K = 0$, $f^{N(d,n_k)} = 0$. \\
\indent The claim  obviously implies the theorem. Note that when $n_k = n_K$, $\mathcal{M}(\widetilde{X})$ completely splits into Tate motives and $End(\mathcal{M}(\widetilde{X})) = Ch_0(Spec~k)^{\oplus r}$ for some $r$. Therefore the claim is valid for $n_k = n_{K}$. Now we use descending induction on $n = n_k$. Let $f \in End(M(\widetilde{X}))$ be an endomorphism in the kernel of the base change map. If all components $\widetilde{Z}_i$  appearing in the motivic decomposition of  Corollary \ref{cor:decomp2} are isotropic, $n$ is maximal and the claim is already proved.  If not, pick a point $z$ in one of the anisotropic components $Z_i$ and  set $L = k(z)$. Over $L$, $ \widetilde{Z}_i$ is isotropic. Therefore, the number $n_i=n_L$ of terms appearing in the motivic decomposition of $\widetilde{X}_L$ is strictly greater than $n$. Thus the claim holds for $\mathcal{M}(\widetilde{X}_L)$ and $f_L^{N(d,n_i)} =0$. Since $N(d,n_i) \leq N(d, n+1)$, it follows that $f_L^{ N(d, n+1)} = 0$. Now we use Theorem 3.1  in  \cite{brosnan_nilpotence} to conclude that  the composition 
\begin{equation*}
\mathcal{M}(\widetilde{Z}_i)\{a_i\} \xrightarrow{j_1} \mathcal{M}(\widetilde{X}) \xrightarrow{f^{(d+1)N(d, n+1)}} \mathcal{M}(\widetilde{X})
\end{equation*}
vanishes where the first arrow comes from the coproduct decomposition. Since for each  summand the composition is zero, we are done.

\section{Split Case}
\label{sec:split}
In this section we assume that $G$ is split, so that $\widetilde{X} \simeq G/\widetilde{P}$ and $X \simeq G/P$. The goal of this section is to understand the cellular structure of $G/\widetilde{P}$ and compute its motive.

\begin{lem}
\label{lem:cellular}
 $\widetilde{X}$ is a cellular variety i.e., it has decomposition into affine cells. Moreover, the affine cells can be obtained by the image of the Schubert cells in $ G/P$ under $\theta: G/P \rightarrow G/\widetilde{P}$  .
\end{lem}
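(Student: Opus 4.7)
The plan is to apply the Bia\l{}ynicki-Birula decomposition to a regular $k$-split cocharacter of $G$ acting on $\widetilde{X}$, and then transport the resulting cellular decomposition back along $\theta$. Since $G$ is split, I would fix a split maximal torus $T\subseteq B\subseteq P$ and choose a regular cocharacter $\lambda:\mathbb{G}_m\to T$, i.e. one that avoids every Weyl wall. Then $(G/P)^{\lambda}=(G/P)^{T}$ is the finite set of isolated reduced $k$-points $\{wP/P:w\in W/W_P\}$, and the classical Bruhat / Bia\l{}ynicki-Birula decomposition of $G/P$ recovers the Schubert cells $C_w:=BwP/P\cong\mathbb{A}^{\ell(w)}$ as the $\lambda$-attracting loci of these fixed points.

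Next I would invoke Theorem \ref{thm:main} for the canonical $G$-equivariant morphism $\theta:X\to\widetilde{X}$. It identifies each connected component of $\widetilde{X}^{\lambda}$ with a projective pseudo-homogeneous variety for the centralizer $H=Z_G(\lambda)=T$, and via Corollary \ref{cor:bijective} puts the components of $\widetilde{X}^{\lambda}$ in bijection with those of $X^{\lambda}$. But the only parabolic subgroup scheme of a torus is the torus itself (any subgroup scheme has reduced part inside $T$, whose reduced projective quotient is a point, forcing the subgroup scheme to be all of $T$), so a projective pseudo-homogeneous variety for $T$ is a single reduced point. Hence $\widetilde{X}^{\lambda}$ consists of exactly the $|W/W_P|$ isolated reduced points $\theta(wP/P)$.

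I would then apply the scheme-theoretic Bia\l{}ynicki-Birula decomposition underlying Theorem \ref{thm:multaction} to the smooth projective $\widetilde{X}$: it writes $\widetilde{X}$ as a disjoint union of locally closed cells $\widetilde{C}_w\cong\mathbb{A}^{a_w}$, one per fixed point, where $a_w$ is the dimension of the positive eigenspace of $\lambda$ on $T_{\theta(wP/P)}\widetilde{X}$. By Lemma \ref{lem:twists}, $a_w$ equals the positive-eigenspace dimension at $wP/P\in G/P$, which is $\ell(w)$. The equivariance of $\theta$ with respect to $\lambda$ forces $\theta(C_w)\subseteq\widetilde{C}_w$, since $\theta$ commutes with $\lim_{t\to 0}\lambda(t)\cdot(-)$; and because $\theta$ is bijective on $K$-points (being a quotient by the infinitesimal normal subgroup scheme $\widetilde{P}/P$), this inclusion of irreducible affine varieties of the same dimension must be an equality of sets, so $\theta(C_w)=\widetilde{C}_w$ as reduced subschemes of $\widetilde{X}$.

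The main technical point requiring care is verifying that $\widetilde{X}^{\lambda}$ really is a disjoint union of reduced isolated points, so that Bia\l{}ynicki-Birula genuinely produces an affine paving with cells of the predicted dimensions; this is exactly what the combination of Theorem \ref{thm:main} and the triviality of proper parabolic subgroup schemes of $T$ delivers. Once that is in place, the dimension match via Lemma \ref{lem:twists} together with the bijectivity of $\theta$ on points makes the identification $\theta(C_w)=\widetilde{C}_w$ essentially formal.
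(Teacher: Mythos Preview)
Your argument is correct, but it takes a different route from the paper's proof. The paper argues directly via unipotent orbits: with $U$ the unipotent radical of a Borel $B\subseteq P$, the Schubert cells $C(w)=UwP/P$ cover $G/P$, and one simply pushes forward the filtration by Schubert varieties along $\theta$. The images $\theta(C(w))$ are then $U$-homogeneous (since $\theta$ is $G$-equivariant and $U$ already acts transitively on $C(w)$), and a homogeneous space for a unipotent group is affine by a standard result in Demazure--Gabriel. No appeal to Bia\l{}ynicki--Birula, to the structure of $\widetilde{X}^{\lambda}$, or to the earlier tangent-space Lemma~\ref{lem:twists} is needed.

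Your approach instead deduces the cellular structure from the $\mathbb{G}_m$-machinery already set up in \S\ref{sec:iso}: a regular cocharacter, Theorem~\ref{thm:main} to see that the fixed locus is a finite set of reduced points, Lemma~\ref{lem:twists} to match cell dimensions, and finiteness/bijectivity of $\theta$ to identify $\theta(C_w)$ with the BB cell $\widetilde{C}_w$. This is perfectly sound and has the virtue of recycling the results of \S\ref{sec:iso}, so nothing new needs to be proved. The paper's argument, by contrast, is self-contained and slightly sharper in that it exhibits each cell concretely as a $U$-orbit rather than as an abstract attracting set; this is what feeds directly into Lemma~\ref{lem:chow}, where the Schubert varieties $\widetilde{X}(w)$ give an explicit basis for the Chow groups. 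One small caution in your write-up: when you invoke Theorem~\ref{thm:main} with $H=Z_G(\lambda)=T$, note that $T$ is a torus and not semi-simple, so strictly speaking you are using the observation (made just before Corollary~\ref{cor:decomp2}) that the center acts trivially and the pseudo-homogeneous varieties are really for the derived subgroup, which here is trivial.
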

\begin{proof} We follow the proof of  \S 2.2 in \cite{lauritzen}.  We know that $X=G/P$ is cellular because $G/P$ is a disjoint union of Schubert cells $C(w) = UwP/P$ where $U$ is the unipotent radical of $B$.  Let $X(w) = \overline{C(w)}$  be the corresponding  Schubert variety. Let $\widetilde{X}(w)$ be the scheme theoretic image of $X(w)$ in $\widetilde{X} = G/\widetilde{P}$ under the canonical map $\theta: G/P \rightarrow G/\widetilde{P}$.  Call it a Schubert variety in  $\widetilde{X}$ . We get a filtration $ \widetilde{X} = \widetilde{X}_0 \supseteq \widetilde{X}_1 \supseteq  \widetilde{X}_2 \supseteq \ldots $   where $\widetilde{X}_i$ is the union of codimension $i$ Schubert varieties in $\widetilde{X}$ and  $\widetilde{X}_i - \widetilde{X}_{i+1} =  \coprod \theta(C(w))$. Here $\theta(C(w))$ are disjoint because $\theta$ is bijective.  Moreover $\theta$ is $U$-equivariant and $U$ acts transitively on $\theta(C(w))$. Therefore  by  IV.3.16  in \cite{DG}, $\theta(C(w))$ is affine. So $\widetilde{X}$ is a disjoint union of affine cells $\theta(C(w))$.
\end{proof}

\begin{lem} 
\label{lem:chow}
Wtih the notations in the proof of  Lemma \ref{lem:cellular}, the  classes  of Schubert varieties $[\widetilde{X}(w)]$ form a basis for the Chow group of $G/\widetilde{P}$. As a consequence  $Ch_i(G/\widetilde{P}) \simeq Ch_i(G/P)$.
\end{lem}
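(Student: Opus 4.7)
The plan is to apply the standard argument for the Chow groups of a cellular variety to the filtration by Schubert varieties constructed in Lemma \ref{lem:cellular}. The starting observation is that $\theta$ induces a bijection between Schubert cells $C(w) \subseteq G/P$ and the affine cells $\theta(C(w)) \subseteq G/\widetilde{P}$: since $\theta$ is bijective on $K$-points and restricts to a surjection of irreducible varieties of the same dimension, the two sets of cells are in natural bijection and dimensions match, i.e.\ $\dim \theta(C(w)) = \dim C(w) = \ell(w)$. Moreover, each $\theta(C(w))$ is $U$-homogeneous and affine, and in fact isomorphic to an affine space of dimension $\ell(w)$ (this can be extracted from Lauritzen's explicit local coordinates on $G/\widetilde{P}$ in \cite{lauritzen}, which realize each Bruhat-type cell as $\mathrm{Spec}\,K[X_1,\ldots,X_{\ell(w)}]$).

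With this in hand, I would induct on $j$ using the localization exact sequence
\begin{equation*}
Ch_i(\widetilde{X}_{j-1}) \to Ch_i(\widetilde{X}_j) \to Ch_i(\widetilde{X}_j \setminus \widetilde{X}_{j-1}) \to 0.
\end{equation*}
The right-hand term is the Chow group of a disjoint union of affine spaces, hence free on the classes of those cells of dimension $i$ and zero for $i$ not equal to the dimension of some cell. A routine induction then yields that the classes $[\widetilde{X}(w)]$ generate $Ch_*(\widetilde{X})$, and that the sequences split in each degree so that these classes are linearly independent. As an alternative verification of freeness and rank, one may apply Theorem \ref{thm:cellular} to the same filtration (taking $Z_i$ to be a disjoint union of copies of $\mathrm{Spec}\,k$, one per cell of dimension $i$) to conclude $\mathcal{M}(\widetilde{X}) = \coprod_w \Lambda(\ell(w))$, which pins down the rank of the Chow group to be the number of Weyl group cosets corresponding to $w$.

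For the consequence $Ch_i(G/\widetilde{P}) \simeq Ch_i(G/P)$, the classical Bruhat decomposition of $G/P$ makes $Ch_i(G/P)$ free on the Schubert classes $[X(w)]$ with $\ell(w) = i$, and the first part of the lemma makes $Ch_i(G/\widetilde{P})$ free on the $[\widetilde{X}(w)]$ with the same index set. The bijection between cells induced by $\theta$ therefore produces an abstract isomorphism between the two Chow groups in each degree.

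The main obstacle I foresee is justifying that each image $\theta(C(w))$ is actually isomorphic to an affine space (rather than just being a bijective purely inseparable image of one), since otherwise its Chow groups in lower degrees need not vanish and the localization argument breaks. I would handle this by citing Lauritzen's local coordinate description from \cite{lauritzen} used already in the proof of Lemma \ref{lem:cellular}, which realizes $\theta(C(w))$ as $\mathrm{Spec}\,K[X_1,\ldots,X_{\ell(w)}]$; alternatively, one can verify directly that the stabilizer in $U$ of the base point $w\widetilde{P}$ is a connected unipotent subgroup, so the orbit $\theta(C(w)) \cong U/U_{w\widetilde{P}}$ is an affine space.
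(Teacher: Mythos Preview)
Your approach is correct and essentially matches the paper's proof, which simply invokes Example~1.9.1 in \cite{fulton} (the localization argument you spell out) together with the fact that $\theta$ is a homeomorphism to match up cells and their dimensions. Your explicit concern about $\theta(C(w))$ being an affine space, and your proposed resolution via Lauritzen's coordinates, is precisely what the paper relies on implicitly through the reference to \cite{lauritzen} in the proof of Lemma~\ref{lem:cellular}.
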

\begin{proof}
By  Example 1.9.1 in \cite{fulton}, it is clear that  the classes of Schubert varieties  $[\widetilde{X}(w)]$ form a basis for $Ch_*(G/\widetilde{P})$ and we get an isomorphism 
\begin{align*}
Ch_*(G/P) &\rightarrow Ch_*(G/\widetilde{P}) \\
[X(w)] &\rightarrow [\widetilde{X}(w)]
\end{align*}

\end{proof}

\begin{thm}
\label{thm:rank} The motive $\mathcal{M}(\widetilde{X})$ is  split i.e., it decomposes into direct sum of Tate motives.
Moreover, $\mathcal{M}(X) \simeq \mathcal{M}(\widetilde{X})$. 
\end{thm}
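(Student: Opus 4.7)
The plan is to combine the cellular filtration of $\widetilde X$ from Lemma \ref{lem:cellular} with Theorem \ref{thm:cellular}, and then use the Chow group identification of Lemma \ref{lem:chow} to match Tate summands between $\mathcal M(\widetilde X)$ and $\mathcal M(X)$.

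First I would refine the filtration of Lemma \ref{lem:cellular} one Schubert cell at a time, so that each successive stratum $\widetilde X_i\setminus\widetilde X_{i-1}$ is a single $U$-orbit $\theta(C(w))$. Because $\theta$ is finite surjective and $\dim\widetilde X=\dim X$, the restriction $\theta|_{C(w)}\colon C(w)\to\theta(C(w))$ is itself finite and surjective, forcing $\dim\theta(C(w))=\ell(w)$. Combining this with the explicit local description of $\widetilde X$ from Lauritzen's work (the same source underlying the affine chart produced in Lemma \ref{lem:tangent}) identifies each $\theta(C(w))$ with an affine space $\mathbb A^{\ell(w)}$. Applying Theorem \ref{thm:cellular} with every $Z_i=\operatorname{Spec} k$ then yields
\[
\mathcal M(\widetilde X)=\bigoplus_{w\in W/W_P}\Lambda(\ell(w)),
\]
so $\mathcal M(\widetilde X)$ is a direct sum of Tate motives, proving the splitness claim.

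For the isomorphism $\mathcal M(X)\simeq\mathcal M(\widetilde X)$, the ordinary Bruhat decomposition of $X=G/P$ gives $\mathcal M(X)=\bigoplus_{w\in W/W_P}\Lambda(\ell(w))$, so the two decompositions are literally the same multiset of Tate twists and the claimed isomorphism is immediate. Alternatively, once splitness is known for both motives, the multiplicity of each $\Lambda(i)$ in each of them equals the $\Lambda$-rank of the corresponding codimension-$i$ Chow group; the degree-wise isomorphism $Ch_i(\widetilde X)\simeq Ch_i(X)$ of Lemma \ref{lem:chow} then forces $\mathcal M(X)\simeq\mathcal M(\widetilde X)$.

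The main obstacle I expect is the verification that each stratum $\theta(C(w))$ is genuinely an affine space rather than merely an affine variety of the correct dimension, since Theorem \ref{thm:cellular} requires actual affine fibrations. A priori $\theta(C(w))$ is only known to be a $U$-orbit that is affine (by the proof of Lemma \ref{lem:cellular}), and the non-reducedness of $\widetilde P$ leaves room for a non-rational or non-smooth image. Securing the affine-space structure uniformly across all Schubert cells — by translating the Lauritzen-type big-cell coordinates of Lemma \ref{lem:tangent} to each Schubert point $w\widetilde P$ — is the only genuinely geometric point that goes beyond bookkeeping with the preceding lemmas.
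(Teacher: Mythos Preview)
Your argument is correct and coincides with the paper's \emph{alternative} proof: use the cellular structure from Lemma~\ref{lem:cellular} together with Theorem~\ref{thm:cellular} to split $\mathcal{M}(\widetilde{X})$ into Tate motives, then match multiplicities of $\Lambda(i)$ via the Chow group identification of Lemma~\ref{lem:chow}. Your direct comparison of the two Bruhat decompositions is a legitimate variant of that last step.

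The paper, however, gives a different \emph{primary} proof: it simply invokes Corollary~\ref{cor:decomp2}. When $G$ is split the semi-simple anisotropic kernel is trivial, so every $\widetilde{Z}_i$ and every $Z_i$ in that corollary is $\operatorname{Spec} k$, and the twists $a_i$ coincide for $X$ and $\widetilde{X}$ by construction. This yields both the splitness of $\mathcal{M}(\widetilde{X})$ and the isomorphism $\mathcal{M}(X)\simeq\mathcal{M}(\widetilde{X})$ in one stroke, without any cellular filtration. The payoff is that your ``main obstacle'' evaporates: there is no need to verify that each $\theta(C(w))$ is literally an affine space, because Corollary~\ref{cor:decomp2} goes through the Bia{\l}ynicki-Birula machinery (Theorem~\ref{thm:multaction}) rather than through Theorem~\ref{thm:cellular}.

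Regarding that obstacle in your route: it is not really an obstruction to proving Theorem~\ref{thm:rank}, since the statement of Lemma~\ref{lem:cellular} already asserts a decomposition into affine \emph{cells} (i.e., affine spaces), and you are entitled to cite it. If you are worried about the proof of Lemma~\ref{lem:cellular} itself, note that the cells are orbits of the split unipotent group $U$ on the smooth variety $\widetilde{X}$, and the reference to Lauritzen (\cite{lauritzen}, \S2.2) supplies exactly the affine-space coordinates you are looking for; your concern about ``non-smooth image'' cannot arise since $\widetilde{X}$ is smooth.
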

\begin{proof} 
This follows directly from Corollary \ref{cor:decomp2}. Alternatively, one can also argue as follows. The fact that $\mathcal{M}(\widetilde{X})$ splits into Tate motives follows by Lemma \ref{lem:cellular}, and Theorem \ref{thm:cellular}.  Now observe that for any variety whose motive splits into Tate motives,  the rank of  the $i^{th}$ Chow group  is equal to the number of summands  isomorphic to $\Lambda\{i\}$. Therefore  by Lemma \ref{lem:chow}, $\mathcal{M}(X) \simeq \mathcal{M}(\widetilde{X})$.
\end{proof}

\section{Non-Split Case}
\label{sec:final}
Now we remove the assumption that $G$ is split but keep the assumption that it is inner over $k$. In this section we show that, the motive of any projective pseudo-homogeneous variety for $G$ is same as the corresponding projective homogeneous variety. Recall the following well know fact about parabolic subgroups (\cite{boulder}). 

\begin{fact}
\label{fact:tits}
  Let $G$ be a semi-simple algebraic group over a field $k$. Let $P$ be a parabolic subgroup corresponding to subset $\tau$ of nodes of the Dynkin diagram (See \S \ref{sec:homog}). Let $\mathcal{P}$ denote the conjugacy class of $P$.  Then $\mathcal{P}$ contains a parabolic subgroup defined over $k$  if and only if the nodes in $\tau$ are circled in the Tits index of $G$ over $k$ and $\tau$ is invariant under the $*$-action of $Gal(K/k)$. 
\end{fact}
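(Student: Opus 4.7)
The plan is to invoke the Borel--Tits theory of relative root systems, which provides a dictionary between $k$-rational parabolic subgroups of $G$ and combinatorial data on the absolute Dynkin diagram. First, I would fix a maximal $k$-split torus $S \subseteq G$, a maximal $K$-torus $T \supseteq S$, and a minimal $k$-parabolic $P_0 \supseteq Z_G(S)$ whose base change contains a Borel $B \supseteq T$. This choice singles out a set of simple roots $\Delta = \Delta_G$ of $\Phi(G_K, T)$; by definition of the Tits index, $\alpha \in \Delta$ is circled iff $\alpha|_S \neq 0$, and the white nodes form the Dynkin diagram of the anisotropic kernel $D Z_G(S)$. The $*$-action of $\Gamma = \mathrm{Gal}(K/k)$ on $\Delta$ is the unique action making the natural bijection between subsets $\tau \subseteq \Delta$ and $G_K$-conjugacy classes of parabolics $\Gamma$-equivariant.

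For the forward direction, assume $\mathcal{P}$ contains a $k$-rational member $P$. Galois-stability of the conjugacy class $\mathcal{P}$ translates directly, via the defining property of the $*$-action, to $*$-stability of $\tau$, giving the second condition. For the first condition, Borel--Tits conjugacy of minimal $k$-parabolics lets us arrange $P \supseteq P_0$; then the standard Levi $L$ of $P$ containing $Z_G(S)$ is $k$-defined and contains $D Z_G(S)$. In the paper's convention the Levi of $P_\tau$ has Dynkin diagram $\Delta \setminus \tau$, so the inclusion $L \supseteq D Z_G(S)$ forces every white node to lie in $\Delta \setminus \tau$, i.e., every node of $\tau$ is circled.

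Conversely, given $\tau$ satisfying both conditions, let $Q_\tau \supseteq B$ be the standard parabolic of $G_K$ whose Levi containing $T$ has simple root system $\Delta \setminus \tau$. Condition (i) guarantees that $\Delta \setminus \tau$ contains the Dynkin diagram of $D Z_G(S)$, so $Q_\tau \supseteq (P_0)_K$; condition (ii) combined with the definition of the $*$-action makes the $G_K$-conjugacy class of $Q_\tau$ Galois-stable. Since $Q_\tau$ is the unique member of its conjugacy class containing the $k$-defined subgroup $(P_0)_K$, Galois must fix $Q_\tau$ as a subgroup, and therefore $Q_\tau$ descends by Galois descent to a $k$-parabolic in $\mathcal{P}$. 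The main subtlety is this last step: unpacking how the combinatorial $*$-stability of $\tau$ upgrades to genuine Galois-stability of the geometric object $Q_\tau$, which requires careful bookkeeping of the Weyl-group twist by which the $*$-action is defined relative to the naive Galois action on $\Delta$.
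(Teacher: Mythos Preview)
Your sketch is essentially the standard Borel--Tits argument and is correct in outline; the uniqueness step at the end (the only parabolic in the class $\mathcal{P}$ containing the $k$-defined minimal parabolic $(P_0)_K$ is $Q_\tau$ itself, because any such parabolic contains the Borel $B$ and standard parabolics over $B$ are parametrised by subsets of $\Delta$) is exactly what makes the descent go through, and you have identified the right subtlety about the $*$-action versus the naive Galois action.

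However, there is nothing to compare against: in the paper this statement is recorded as a \emph{Fact} and is not proved at all. It is simply recalled from the literature with a reference to Tits' Boulder paper \cite{boulder}, and then used as a black box in the proof of Lemma~\ref{lem:point}. So your write-up goes well beyond what the paper does; the paper's ``proof'' is the citation.
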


\noindent In our case, since $G$ is assumed to be  inner over $k$, the $*$-action is trivial. Let $X$ and $\widetilde{X}$ be as before.

\begin{lem}
\label{lem:point}
Let $F$ be any field extension of $k$. Then $X$ has an $F$-point iff $\widetilde{X}$ has an $F$-point.
\end{lem}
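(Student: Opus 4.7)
The plan is to establish both implications separately, treating the reverse direction as the substantive content.

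For the forward direction ($X(F)\neq\emptyset \Rightarrow \widetilde{X}(F)\neq\emptyset$), the argument is immediate: given $x \in X(F)$, the image $\theta(x)\in \widetilde{X}(F)$ under the canonical $G$-equivariant morphism $\theta: X \to \widetilde{X}$ supplies the required $F$-point of $\widetilde{X}$.

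For the reverse direction, I plan to invoke Fact~\ref{fact:tits}. Pick $\widetilde{x}\in\widetilde{X}(F)$ and form its stabilizer $\widetilde{Q}:=\mathrm{Stab}_{G_F}(\widetilde{x})$; this is an $F$-defined parabolic subgroup scheme of $G_F$, realizing an $F$-isomorphism $\widetilde{X}_F \simeq G_F/\widetilde{Q}$. Over $\bar{F}$, Wenzel's classification (cf.\ \cite{wenzel}, \cite{desalas}) presents $\widetilde{Q}$ as a pair $(Q,\omega)$, where $Q$ is a parabolic of $G_{\bar F}$ of type $\tau \subseteq \Delta_G$ and $\omega$ is the associated $W$-function, the correspondence being canonical and respecting base change. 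Functoriality of this correspondence lets us descend from the $F$-rationality of $\widetilde{Q}$ to $F$-rationality of the reduced parabolic $Q$ of type $\tau$. Invoking Fact~\ref{fact:tits}, and using that the $\ast$-action is trivial because $G$ is inner, this forces the nodes of $\tau$ to be circled in the Tits index of $G_F$, which in turn produces a $k$-rational parabolic of type $\tau$ in $G_F$ and hence an $F$-point of $X$.

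The main obstacle is justifying the step where we pass from the $F$-rational pseudo-parabolic $\widetilde{Q}$ to an $F$-rational reduced parabolic $Q$ of type $\tau$, since $F/k$ is an arbitrary field extension and may be non-perfect, so that taking underlying reduced subschemes need not commute with base change. The way around this is to avoid taking reduced subschemes set-theoretically and instead use the functoriality of the Wenzel decomposition $(\tau,\omega)$ under base change, or equivalently to observe that the conjugacy-class datum $(\tau,\omega)$ attached to $\widetilde{Q}$ is combinatorial and canonically determines a conjugacy class of reduced parabolics whose $F$-rationality is governed by the same Tits-index circling condition on $\tau$ as for $X$.
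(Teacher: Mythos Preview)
Your forward direction matches the paper's. For the reverse direction, you correctly identify the obstacle: $F$ may be imperfect, so the underlying reduced subscheme of $\widetilde{Q}$ need not be a group scheme over $F$. However, your proposed fix---appealing to ``functoriality of the Wenzel decomposition $(\tau,\omega)$''---is not an actual argument. Wenzel's classification is a statement over an algebraically closed field; it does not come with a descent mechanism, and there is no evident sense in which the assignment $\widetilde{Q}\mapsto (Q,\omega)$ is compatible with the extension $F\hookrightarrow \bar F$ when $F$ is imperfect. Knowing that the conjugacy class of $\widetilde{Q}$ has an $F$-rational representative tells you nothing \emph{a priori} about whether the associated class of reduced parabolics of type $\tau$ does: these are different varieties of subgroup schemes, and the map relating them over $\bar F$ is precisely ``take the reduced part,'' whose behavior over $F$ is exactly the point at issue. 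Your final sentence essentially restates what needs to be proved (that the nodes of $\tau$ are circled in the Tits index of $G_F$) without supplying a reason.

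The paper closes this gap by a different, concrete device: pass to the perfect closure $F'$ of $F$. Over $F'$ the reduced subscheme of the stabilizer \emph{is} a parabolic subgroup of type $\tau$, so the nodes of $\tau$ are circled in the Tits index of $G_{F'}$. The key extra input, absent from your proposal, is that the Tits index is invariant under the purely inseparable extension $F'/F$ (Springer, Exercise~13.2.5(4)); hence the same nodes are circled over $F$, and Fact~\ref{fact:tits} (with trivial $*$-action since $G$ is inner) produces an $F$-defined parabolic of type $\tau$, i.e., an $F$-point of $X$.
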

\begin{proof}
Clearly if $X$ has an $F$-point, its image via the canonical map $X \rightarrow \widetilde{X}$ gives an $F$-point on  $\widetilde{X}$. Now assume that $\widetilde{X}$ has an $F$-point.  Let $F'$ be the perfect closure of $F$. Then by Proposition 12.1.2 of \cite{springer} the stabilizer in $G$ of this $F$-point is defined over $F'$. Without loss of generality we can assume that $\widetilde{P}$ is defined over $F'$. Since $F'$ is perfect the underlying reduced subscheme $P$ is also defined over $F'$. Let $\tau$ be the subset of nodes of Dynkin diagram corresponding to $P$. Since $G$ is inner over $k$, it is inner over $F$. Therefore the  $*$-action is trivial over $F$. Moreover, by Exercise 13.2.5 (4) in \cite{springer}, the Tits index of $F'$ and $F$ are the same. Therefore by Fact \ref{fact:tits}, the conjugacy class  $\mathcal{P}$ of $P$ contains an $F$-defined parabolic  and therefore  $X$ has an $F$-point.
\end{proof}

Note that by Theorem \ref{thm:krull_pseudo}, the motive $\mathcal{M}(\widetilde{X})$ satisfies the  Krull-Schmidt principle. Therefore we can talk about \emph{the unique} upper summand $U_{\widetilde{X}}$ of   $\mathcal{M}(\widetilde{X})$.

\begin{cor}
\label{cor:upper}
Let $X$ and $\widetilde{X}$ be as above. Then  in $Chow(k, \Lambda)$, $U_X \simeq U_{\widetilde{X}}$.
\end{cor}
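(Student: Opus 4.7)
The upper indecomposable summands $U_X$ and $U_{\widetilde X}$ are well-defined thanks to Krull--Schmidt: Theorem \ref{thm:krull} together with Rost nilpotence for projective homogeneous varieties (Theorem \ref{thm:rost}) handles $X$, while $\widetilde X$ is handled by Theorem \ref{thm:krull_pseudo}. My plan is to exhibit multiplicity-one correspondences between $X$ and $\widetilde X$ in both directions and then invoke the now-standard mechanism of Karpenko from \cite{upperkarpenko} to obtain the claimed isomorphism $U_X \simeq U_{\widetilde X}$.

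Set $d = \dim X = \dim \widetilde X$, an equality forced by $P = \widetilde P_{\mathrm{red}}$ together with the fact that a scheme of finite type and its reduction have the same dimension. In the forward direction I will take $\alpha := [\Gamma_\theta] \in \mathrm{CH}^d(X \times \widetilde X)$, the class of the graph of the canonical $G$-equivariant morphism $\theta \colon X \to \widetilde X$; since the first projection $\Gamma_\theta \to X$ is an isomorphism, one has $\mathrm{mult}(\alpha) = 1$. For the reverse direction I will invoke Lemma \ref{lem:point} with $F = k(\widetilde X)$: the generic point gives $\widetilde X(k(\widetilde X)) \neq \emptyset$, and the lemma then produces $X(k(\widetilde X)) \neq \emptyset$, which is the same thing as a rational map $\widetilde X \dashrightarrow X$. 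The closure of its graph in $\widetilde X \times X$ is an irreducible $d$-dimensional cycle $\beta \in \mathrm{CH}^d(\widetilde X \times X)$, birational over $\widetilde X$ by construction, so $\mathrm{mult}(\beta) = 1$.

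Finally I will compose. Multiplicativity of $\mathrm{mult}$ along composition of correspondences gives $\mathrm{mult}(\beta \circ \alpha) = 1$ in $\mathrm{End}(\mathcal M(X))$. Letting $e$ denote the projector onto $U_X$---which itself has $\mathrm{mult}(e) = 1$, since $U_X$ is upper and $\Lambda$ is connected so that the idempotent $\mathrm{mult}(e) \in \Lambda$ must be $1$---the endomorphism $e \beta \alpha e \in \mathrm{End}(U_X)$ still has multiplicity $1$ and is therefore non-nilpotent. The standard Karpenko argument, exploiting Rost nilpotence (Theorem \ref{thm:nilpotence}) together with the locality of $\mathrm{End}(U_X)$ under Krull--Schmidt, will then force this endomorphism to be an automorphism, so $\alpha$ splits $U_X$ off $\mathcal M(\widetilde X)$ as a direct summand. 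Being upper and indecomposable, that summand must coincide with $U_{\widetilde X}$ by Krull--Schmidt uniqueness inside $\mathcal M(\widetilde X)$, yielding $U_X \simeq U_{\widetilde X}$. The one technical hurdle is the ``non-nilpotent implies unit'' step in $\mathrm{End}(U_X)$; this is exactly the core of Karpenko's upper-motives framework and transfers verbatim now that Theorems \ref{thm:nilpotence} and \ref{thm:krull_pseudo} are at our disposal.
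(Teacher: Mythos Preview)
Your proof is correct and follows essentially the same approach as the paper: construct a multiplicity-one correspondence $\alpha$ from the graph of $\theta\colon X\to\widetilde X$, construct a multiplicity-one correspondence $\beta$ from the rational map $\widetilde X\dashrightarrow X$ furnished by Lemma~\ref{lem:point} applied to $F=k(\widetilde X)$, and then invoke Karpenko's upper-motive machinery. The only difference is packaging: the paper cites Corollary~2.15 of \cite{upperkarpenko} as a black box, whereas you unpack its proof (multiplicativity of $\mathrm{mult}$, non-nilpotence, locality of $\mathrm{End}(U_X)$).
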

\begin{proof}
By Corollary 2.15 of \cite{upperkarpenko},  it suffices to show multiplicity one correspondences $\alpha: \mathcal{M}(X) \rightarrow \mathcal{M}(\widetilde{X})$ and $\beta: \mathcal{M}(\widetilde{X}) \rightarrow \mathcal{M}(X)$. Take $\alpha$ to be the correspondence induced from the canonical map $X \rightarrow \widetilde{X}$.  For $\beta$, first observe that $\widetilde{X}$ has an $k(\widetilde{X})$-point. Then by Lemma \ref{lem:point}, so does $X$. Now take $\beta$ to be the correspondence induced from the rational map $\widetilde{X} \dashedrightarrow X$.

\end{proof}

\noindent \emph{Notation:} For a variety $X$, $A^i(X, \Lambda)$  denotes the $i^{th}$ Chow group of $X$ with coefficients in $\Lambda$ graded   by codimension.  We simply write $A^i$  if $X$ and $\Lambda$ are clear from the context. $A^{\geq i}$ denotes $\bigoplus_{j \geq i } A^j$. Similarly define $A^{> i}$,  $A^{\leq i}$  and  $A^{< i}$.  \\
\indent  $A_i(X, \Lambda)$  denotes the $i^{th}$ Chow group of $X$ with coefficients in $\Lambda$ graded   by dimension. We make similar definitions  for $A_{\geq i}$, $A_{ >i}$,  $A_{\leq i}$  and  $A_{< i}$. \\

Recall  that for a motive $M$, $Ch^i(M)$ is defined as  $Hom(M, \Lambda\{i\})$  in the category $Chow(k, \Lambda)$.\\
\noindent\emph{Definition:}  Let $\epsilon$ be the function  on the objects of $Chow(k, \Lambda)$ defined as follows:
\begin{align*}
\epsilon: Ob(Chow(k, \Lambda)) &\longrightarrow ~~~~~~\mathbb{N} ~\bigcup ~ \{-\infty\}\\
M ~~~~~~~&\longmapsto min\{i ~| Ch^i(M_{K}) \neq 0\}
\end{align*}

\subsection{Proof of Theorem \ref{thm:cool}}
\label{sec:final1}
\indent Since $X$ is projective homogeneous variety for $G$, by Theorem 1.1 of \cite{outerkarpenko}, every indecomposable summand $M$ of $\mathcal{M}(X)$ is isomorphic to  $U_{Y}\{i\}$ for some projective homogeneous variety $Y$ corresponding to  $\tau$ such that $\tau \supseteq \tau_L$. By condition (2),  $U_{Y_L}\{i\}$ comes from  an indecomposable summand $\widetilde{M}$ of  $\mathcal{M}(Z)$ (Here $U_{Y_L}$ denotes the upper motive of $Y_L$. It is not the same as $(U_Y)_L$. But is the upper motive of  $(U_Y)_L$). We claim that $M \simeq \widetilde{M}$.  It is clear that if $M$ and $N$ are distinct (may or may not be isomorphic) indecomposable summands of $\mathcal{M}(X)$, $\widetilde{M}$ and $\widetilde{N}$ are distinct  indecomposable  summands of $\mathcal{M}(\widetilde{X})$.  This together with condition (2)   implies that  it suffices to prove the claim to complete the proof. \\
 \indent The proof of the claim is by induction on $\epsilon (M)$. For the base case $\epsilon (M) = 0$, the claim clearly holds by condition (1).  Now let $M \simeq U_Y\{i\}$ be a summand of $\mathcal{M}(X)$ as above. Then $\epsilon(M) = i$ and assume that for all indecomposable summands $N$ with $\epsilon(N) < i$, $N \simeq \widetilde{N}$. Write $\mathcal{M}(X) = P \bigoplus Q$ where $\epsilon(P') < i$ for every indecomposable summand $P'$ of $P$ and $\epsilon(Q) \geq i$ . Then by induction hypothesis, $\mathcal{M}(Z) \simeq P \bigoplus R$. By Theorem \ref{thm:krull},  $Q_L \simeq R_L$. By assumption $M$ is a summand of $Q$ and so $\widetilde{M}$ is a summand of $R$. Observe that $\epsilon(\widetilde{M}_L) = i$ as $\epsilon(Q_L) \geq i$. Therefore if $\pi \in End(\mathcal{M}(Z))$ is the projector giving rise to the summand $\widetilde{M}$, then $\pi_{\overline{L}} = \sum b_k \times a_k \in \sum_{I} A^r \times A_r$ for a multiset $I$ such that  $r \geq i$ for every $r \in I$ and $a_k \cdot b_j = \delta_{kj}$ (Here $\delta_{kj}$ is the Kronecker delta function). \\
\indent To complete the proof, it suffices to find  $\alpha: \mathcal{M}(Y)\{i\} \longrightarrow \widetilde{M}$ and $\beta: \widetilde{M} \longrightarrow \mathcal{M}(Y)\{i\}$ such that $mult(\beta \circ \alpha) = 1$ (See Lemma 2.14 of \cite{upperkarpenko}). \\
\indent For a motive $N$ over $k$, let $\overline{N}$ denote the motive base changed to $\overline{L}$ and for a variety $V$ over $k$, $\overline{V}$ denotes $V \times_{Spec~k} \overline{L}$. \\
\indent First note that we have $a \in Hom(\Lambda\{i\}, \mathcal{M}(\overline{Z})) = A_i(\overline{Z})$ given by $\Lambda\{i\} \hookrightarrow \overline{U}_{Y_L}\{i\} \hookrightarrow \mathcal{M}(\overline{Z})$  and   $b \in Hom(\mathcal{M}(\overline{Z}), \Lambda\{i\}) = A^i(\overline{Z})$ given by $\mathcal{M}(\overline{Z}) \rightarrow \overline{U}_{Y_L}\{i\} \rightarrow \Lambda\{i\}$ such that  $mult(b \circ a) =1$  i.e., $a \cdot b = 1$. Observe that with this notation, $\overline{\pi} = b \times a + \sum_{k} b_k \times a_k$ where $b_k \times a_k \in A^{\geq i} \times A_{\geq i}$, $a \cdot b_k = 0 ~\forall b_k$ and $a_k\cdot b = 0~ \forall a_k$. \\
\noindent \underline{\emph{Construction of $\alpha$}}: \\
\indent Let $\alpha _1 \in Hom(\mathcal{M}(Y_L)\{i\}, \mathcal{M}(Z_L)) = A^{dim~Z-i}(Y_L \times Z_L)$  be given by $\mathcal{M}(Y_L)\{i\} \rightarrow U_{Y_L}\{i\} \hookrightarrow \mathcal{M}(X_L) \xrightarrow{\simeq} \mathcal{M}(Z_L)$. Then 
\begin{align*}
\overline{\alpha}_1 \in 1 \times a + A^{>0} \times A_{>i}
\end{align*}

Let $\alpha_2$ be the image of $\alpha_1$ under the pull back of Chow groups
\begin{align*}
A^{dim~Z -i}(Y_L \times_L Z_L) \longrightarrow A^{dim~Z -i} (Spec~L(Y) \times_L Z_L)
\end{align*}
induced by $Spec~L(Y) \times_L \times Z_L \rightarrow Y_L \times_L Z_L \simeq (Y \times Z)_L$. Then 
\begin{align*}
\overline{\alpha}_2  = Spec~\overline{L}(Y) \times a.
\end{align*}
 
Since $\tau_L \subseteq \tau$, $X$ has an $k(Y)$-point. So $k(Y)(X)/k(Y) = L(Y)/k(Y)$ is purely transcendental. Therefore $\alpha_2$ is $k(Y)$ rational. So $\alpha_2 \in A^{dim~Z -i}(Spec~ k(Y) \times Z)$. Let $\alpha'$ be any preimage of $\alpha_2$ under the surjective map  of Chow groups
\begin{align*}
A^{dim~Z -i }(Y \times Z) 	\twoheadrightarrow A^{dim~Z -i}(Spec~k(Y) \times Z)
\end{align*}
induced by $Spec~k(Y) \times Z \rightarrow Y \times Z$. Then 
\begin{align*}
\overline{\alpha'} \in 1 \times a + A^{>0} \times A_{>i}
\end{align*}

Let $p:\mathcal{M}(Z) \rightarrow \widetilde{M}$ be the projection from our decomposition. Define 
\begin{align*}
\alpha = p \circ \alpha'
\end{align*}

\noindent \underline{\emph{Construction of $\beta$}}:\\
\indent Let $\beta_1 \in Hom( \mathcal{M}(Z_L), \mathcal{M}(Y_L)\{i\})$ be given by $\mathcal{M}(Z_L) \xrightarrow{\simeq} \mathcal{M}(X_L) \rightarrow  U_{Y_L}\{i\} \rightarrow \mathcal{M}(Y_L)\{i\}$. Then,
\begin{align*}
\overline{\beta}_1 \in b \times y+ A^{>i} \times A_{>0}
\end{align*}
where $y$ is the class of a point in $\overline{Y}$.
Let $\beta_2$ be an element in the inverse image of $\beta_1$ under the surjective map of Chow groups
\begin{align*}
A^{dim~Y + i}(Z \times X \times Y) \twoheadrightarrow A^{dim~Y +i}(Z_L \times Y_L)
\end{align*}
induced by $Z_L \times_{L} Y_L \simeq (Z \times_k Y) \times Spec~k(X)  \rightarrow Z \times Y \times X \rightarrow Z \times X \times Y$ where the last  map is  obtained by switching second and third factors.  Then

\begin{align*}
\overline{\beta}_2  \in b \times 1 \times y + A^{>i} \times 1 \times A_{>0} +  A^* \times  A^{>0} \times  A^*
\end{align*}

Recall that $\pi\in End(\mathcal{M}(Z))$ is the projector giving  the summand $\widetilde{M}$.  Let $\beta_3 = \beta_2 \circ \pi$ where $\beta_2$ is thought of as an element in $Hom(\mathcal{M}(Z), \mathcal{M}(X \times Y)\{i - dim~X\})$.  Then

\begin{align*}
\overline{\beta}_3 \in p_{134*}[(b \times a \times 1 \times 1 +  \sum_{k} b_k &\times a_k \times 1 \times 1) \cdot (1 \times b \times 1 \times y + 1 \times A^{>i} \times 1 \times A_{>0} + 1 \times A^* \times A^{>0} \times A^*)]\\
  &\overline{\beta}_3  \in  b \times 1 \times y +  A^{i} \times A^{>0} \times A^* +  A^{>i} \times 1 \times A_{>0}  + A^{>i} \times A^{>0} \times A^*
\end{align*}

By condition (1) in the hypothesis of the theorem $U_X \simeq U_Z$. This implies by Corollary 2.15 of \cite{upperkarpenko} that we have a multiplicity 1 correspondence $\Gamma \in A_{dim~Z}(Z \times X)$ . Then $\overline{\Gamma} = 1 \times x + A^{>0} \times A_{>0}$ where $x$ refers to the class of a point in $\overline{X}$. \\
\indent Now $ \Gamma \times 1 \in A_{dim~Z + dim~ Y}(Z \times X \times Y)$. Define  $\beta' = p_{13*}[(\Gamma \times 1 ) \cdot \beta_3] \in A^{dim~Y +i}(Z \times Y) = Hom(\mathcal{M}(Z), \mathcal{M}(Y)\{i\})$. Then

\begin{align*}
\overline{\beta'} \in p_{13*}[(1 \times x \times 1 + A^{>0} \times A_{>0} \times 1) &\cdot (b \times 1 \times y +  A^{i} \times A^{>0} \times A^* +  A^{>i} \times 1 \times A_{>0}+  A^{>i} \times A^{>0} \times A^*)]\\
\overline{\beta'}  &\in b \times y + A^{>i} \times A_{>0}
\end{align*}

Now define $\beta = \beta' \circ q$ where $q: \widetilde{M} \hookrightarrow \mathcal{M}(Z)$ is inclusion map from our decomposition.  \\
\indent We now see that $\beta \circ \alpha = \beta' \circ q \circ p \circ \alpha' = \beta' \circ \pi \circ \alpha'$. Note that 

\begin{align*}
\overline{\pi} \circ \overline{\alpha'}  \in p_{13*}[(1 \times a &\times 1 + A^{>0} \times A_{>i} \times 1) \cdot (1 \times b \times a + \sum_{k}1 \times b_k \times a_k)]\\
& \overline{\pi} \circ \overline{\alpha'}   \in 1 \times a + A^{>0} \times A_{>i}
\end{align*}

Finally we see that 
\begin{align*}
\overline{\beta} \circ \overline{\alpha} \in p_{13*}[( 1 \times a &\times 1  + A^{>0} \times A_{>i} \times 1) \cdot (1 \times b \times y +1 \times  A^{>i} \times A_{>0})]\\
&\overline{\beta} \circ \overline{\alpha} \in 1  \times y + A^{>0} \times A_{>0}
\end{align*}

Therefore,  $mult(\beta \circ \alpha) =1$.
\\

We are now ready to prove Theorem \ref{thm_motives}. \\

\subsection{Proof of Theorem \ref{thm_motives}}
\label{sec:final2}
\indent We will prove by induction on $n= \mathrm{rank}(G)$. The claim is trivially true for $n=0$. Assume that the claim is true for all groups with rank less than  $n$. Let $\mathrm{rank}(G) = n$. We can assume that $X \neq Spec(k)$ (otherwise there is nothing to prove).  Let $L=k(X)$ and $G'$ the anisotropic kernel of $G_L$. Then $\mathrm{rank}(G') < \mathrm{rank}(G)$.  Now by Corollary \ref{cor:decomp2}, $\mathcal{M}(\widetilde{X}_L) = \coprod_{i} \mathcal {M}(\widetilde{Z_i})\{a_i\}$  and $\mathcal{M}(X_L) = \coprod_{i} \mathcal {M}(Z_i)\{a_i\}$ where $\widetilde{Z_i}$ is projective pseudo-homogeneous for $G'$ and $Z_i$ the corresponding projective homogeneous variety.  By induction hypothesis, we have  $\mathcal {M}(\widetilde{Z_i}) \simeq  \mathcal {M}({Z_i})$ and thus  $\mathcal{M}(\widetilde{X}_L) \simeq \mathcal{M}(X_L)$. Moreover by Corollary \ref{cor:upper} $U_X \simeq U_{\widetilde{X}}$. Therefore, by Theorem \ref{thm:cool}, we are done.

\section*{Acknowledgments}

I would like to thank my advisor, Patrick Brosnan, for introducing  this topic to me and suggesting this problem for my thesis. I am very grateful to him for all the  useful discussions and for his feedback as it greatly improved the quality of this manuscript.

\nocite*{}
\bibliographystyle{plain}
\bibliography{reference}

\end{document}